\newtheorem{theo}{Theorem}[section]
\newtheorem{prop}{Proposition}[section]
\newtheorem{lemma}{Lemma}[section]
\newtheorem{rema}{Remark}[section]
\numberwithin{equation}{section} 
\def\t{\textrm} 
\def\beq{\begin{eqnarray}}
\def\eeq{\end{eqnarray}}
\def\baa{\begin{array}}
\def\eaa{\end{array}}
\newcommand{\bdef}{\begin{definition}}
\newcommand{\be}{\begin{equation}}
\newcommand{\ee}{\end{equation}}
\newcommand{\bt}{\begin{theo}}
\newcommand{\et}{\end{theo}}
\newcommand{\bl}{\begin{lemma}}
\newcommand{\el}{\end{lemma}}
\newcommand{\bp}{\begin{prop}}
\newcommand{\epr}{\end{prop}}
\def\C{C^f}
\def\dsp{\displaystyle}
\def\pa{\partial}
\def\ep{\epsilon}
\def\ov{\overline}
\let\dsp=\displaystyle
\def\R{{\mathbb R}}
\def\li{\lambda_i}
\def\lai{\lambda_i}
\def\v{v_i}
\def\u{u_i}
\def\vx{{v_i}_x}
\def\ux{{u_i}_x}
\def\vt{{v_i}_t}
\def\ut{{u_i}_t}
\def\beti{\beta_i}
\def\Di{D_i}
\def\phii{\phi_i}
\def\psii{\psi_i}
\def\Psii{\Psi_i}
\def\phx{{\phi_i}_x}
\def\psix{{\psi_i}_x}
\def\Psix{{\Psi_i}_x}
\def\phxx{{\phi_i}_{xx}}
\def\pho{\phi_{0i}}
\def\uo{u_{0i}}
\def\vo{v_{0i}}
\def\vox{{v_{0i}}_x}
\def\hv{\hat v}
\def\M{{\mathcal{M}}}
\def\U{\mathcal{O}}
\def\E{\mathcal{I}}
\def\A{\mathcal{A}}
\def\U{\mathcal{U}}
\def\Ii{I_i}
\def\iIi{\int_{\Ii}}
\def\nld{\Vert_{2}}
\def\nhu{\Vert_{H^1}}
\def\soT{\sup_{[0,T]}}
\def\iot{\int_0^t}
\def\nhd{\Vert_{H^2}}
\def\ioT{\int_0^T}
\def\suM{\sum_{i\in \M}}
\def\suEn{\sum_{i\in \E^\nu}}
\def\suUn{\sum_{i\in \mathcal O^\nu}}
\def\suMj{\sum_{j\in \M}}
\def\suMjj{\sum_{j\in \mathcal J}}
\def\suMjn{\sum_{j\in \M^\nu}}
\def\del{\Delta^h}
\def\T{\mathcal{T}}
\begin{document}

\title[]
{Global solutions  for a chemotaxis hyperbolic-parabolic  system on networks with nonhomogeneous boundary conditions}
%%%%%%%%%%%%%%%%         Author              %%%%%%%%%%%%%%%%%%%%%%%%%%%
\author[ Francesca R. Guarguaglini]{ Francesca R. 
Guarguaglini$^\diamond$}
 
\thanks{ \noindent 
$\diamond$ Dipartimento di Ingegneria e Scienze dell'Informazione e Matematica, Universit\'a degli
Studi di L'Aquila, Via Vetoio, I--67100 Coppito (L'Aquila), Italy. E--mail: france.gua@gmail.com
}
\subjclass{Primary: 35R02; Secondary: 35M33, 35L50, 35B40, 35Q92}
 \keywords{nonlinear  hyperbolic systems, transmission conditions on networks, nonhomogeneous boundary conditions, stationary solutions, global solutions, chemotaxis}  
\date{}

%%%%%%%%%%%%%%%%%%%% Title
\begin{abstract} 
In this  paper we study a semilinear hyperbolic-parabolic system as a model for  some chemotaxis  phenomena evolving on  networks; we consider transmission conditions at the inner nodes which preserve the fluxes and nonhomogeneous boundary conditions
having  in mind phenomena with inflow of cells and food providing at the network exits. 
We give some conditions on the boundary data which ensure the existence of stationary solutions and we prove that these ones are asymptotic profiles for a class of global solutions.
\end{abstract}
\maketitle
\bigskip

\begin{section}
{\bf Introduction}

In this paper we consider the  one dimensional semilinear hyperbolic-parabolic system 
\begin{equation}\label{is}
    \left\{ \begin{array}{l}
\partial_tu +\lambda\partial_x v=0\ ,
\\ \\
\partial_t v +\lambda \partial_x u= u \partial_x \psi -\beta v\ ,
\\ \\
\partial_t \psi= D\partial_{xx} \psi +a u -b\psi\ ,
\end{array}\right.\end{equation}
on a  finite planar network, where $\lambda, \beta, D,b>0$ and $a\geq 0$ .

 The system    has been proposed as a model for  chemosensitive movements of bacteria or cells; the unknown $u$ stands for the cells concentration, $\lambda v$ denotes  their average flux and $\psi$ is the chemo-attractant concentration produced by the cells themselves;
 the individuals  move at a constant velocity, whose modulus is $\lambda$, towards the right or left along the axis; $\beta$ is the friction coefficient while $D,a,b$ are respectively  the diffusion coefficient, the production rate and the degradation one for the chemoattractant .
 
 Systems like (\ref{is}) are   adaptations  of the so-called Cattaneo equation to the chemotactic case, introducing the nonlinear term $u\psi_x$ in the equation for the flux \cite{Segel, AG}, and their solutions have been  studied in \cite{hillerho,hillenst,gumanari}; they are  included among  hyperbolic models which have been recently introduced in contrast to the parabolic ones considered before,
 since they give rise to  a finite speed of propagation and allow better observation of the phenomena during the initial phase.
 
 In recent years, one dimensional  models  on networks  have been developed in order to describe particular chemotactic phenomena like
the process of dermal wound healing and the behavior of the slime mold  Physarum polycephalum as a model for amoeboid movements.
Actually, during the healing process, the stem cells in charge of the reparation of dermal tissue (fibroblasts), create a new extracellular matrix, essentially made by collagen, and move along it to fill the wound driven by chemotaxis
and  tissue engineers insert artificial scaffolds within the wound to accelerate this process
 \cite{harley, mandal, spadac}; also,  the body of Physarum polycephalum
contains a network of tubes  which are used by nutrients and chemical signals to circulate throughout the organism \cite{naka}.

These models are heavily characterize by the transmission conditions set at the internal nodes of the network, which couple the solutions on different arcs.

Here  we consider the system (\ref{is}) on  a network  whose arcs $I_i$ are  characterized by the parameters $\li,\beta_i,D_i,a_i, b_i$.   The triples of unkowns $(\u,\v,\psi_i)$ corresponding to each arc are coupled by the transmission conditions introduced in  \cite{noi} set at the inner nodes, which impose that the sum of the incoming fluxes equals the sum of the outgoing ones, rather than the continuity of the densities, since the eventuality of jumps at the nodes for these quantities   seems a more appropriate framework to describe movements of individuals.

This model, complemented with homogeneous boundary conditions at the external vertices of the network, was studied in
\cite{noi}, concerning the existence and the uniqueness of global solutions 
 in the case of suitably small initial data; moreover, results about existence of stationary solutions and asymptotic behaviour are given in \cite{G}; finally, 
  in \cite{BNR}  the authors carry out a numerical study of the same system with transmission conditions set for
the Riemann invariants of the hyperbolic part, which are equivalent to our ones for some choices of the coefficients.

Results about hyperbolic models on networks can be found in \cite{ zua1,pic, zua2,nik,zong}, with different kinds of transmission conditions; moreover parabolic chemotaxis models on networks were studied in \cite{Bor,CC,mugn}, with continuity conditions at the nodes.

It is worth considering  system (\ref{is}) with nonhomogeneous  boundary conditions at the  outer nodes of  the network, having  in mind phenomena with inflow of cells and food providing at the network exits,  in particular experiments 
on the behaviour of  Physarum   \cite{naka}. 
We remark that \cite{BN} contains a numerical approach to system (\ref{is}) on networks, with transmission conditions given for the Riemann invariants and nonhomogeneous  conditions at the boundaries;   the numerical tests show the 
correspondence with  the main features of the 
real behavior of slime mold examined  through the laboratory experiments:
dead end cutting and the selection of the solution path among the competitive paths.

So, in the present paper  we consider system (\ref{is}) with the dissipative transmission conditions introduced in \cite{noi} at the inner nodes,  and nonhomogeneous Neumann conditions for the hyperbolic part and nonhomogeneous Robin condition for the parabolic equation at the external ones.
The boundary data are assumed to satisfy suitable hypothesis   ensuring, in particular,  the boundedness of the total mass of cells during the phenomenon evolution; the mass is preserved in case of homogeneous Neumann  conditions, since   the conservation of the fluxes holds at each  inner nodes, due to the transmission conditions \cite{noi,G}, but in the present case it depends on the evolution in time of the boundary values for the fluxes $\li v_i$.

 The first result in the  paper is the existence of local solutions; it is achieved by linear contraction semigroups  together with the abstract theory for semilinear problems, and the dissipative transmission conditions at the inner nodes play a fundamental role.
 
 The existence of global solutions is achieved under  assumptions of smallness of the data, proceeding in some steps. 
 First we  assume the existence of a stationary solution $(U(x),V(x),\Psi(x))$ to the problem and we obtain 
  a priori estimates for solutions corresponding to initial and boundary data which are small  perturbations of  the possible stationary solution. Here a fondamental role is played by a  suitable condition stated for  the transmission coefficients, which 
  allows to express the jumps of the densitiy $u$ at each inner node as linear combinations of the values of the fluxes  at the same node. This fact and assumptions on the data provide a  control of 
   the evolution in time of the $L^\infty$- norm of the density  which permits
  to remove some conditions on the parameters $a_i$ and $b_i$ considered in \cite{noi,G}.
 When the boundary data for the fluxes $\li \v$ are constant functions, the hypothesys necessary to prove the a priori estimates imply that the sum of the fluxes incoming in the network have to equal the sum of the outgoing ones and that the initial mass of cells has to equal the mass of the stationary solution.
 
 If a stationary solution $(U(x),V(x),\Psi(x))$ exists and the quantities $\Vert U\Vert_\infty$ and $\Vert \Psi_x\Vert_\infty$ are small, the a priori estimates provide a  bound, uniform in time, for a norm of the solutions having small perturbations of  the  stationary one as initial and boundary data; in this  way, after the proof of real existence of stationary solutions,  we would obtain
  the  existence of global solutions for a class of initial and boundary data and would identify   the stationary solutions as the asymptotic profiles for such class of solutions.

 For this reason we devote part of this paper to study the existence of stationary solutions. In the cases of acyclic networks we prove two results, under different smallness conditions on the boundary  data and on the total mass; in particular we give conditions which ensure the existence of a stationary solution with non-negative density $U$.
 For general networks we exhibit some stationary solutions in very particular cases for the parameters of the problem.
 
 The paper is organized as follows. In Section 2 we give the statement of the problem and, in particular, we introduce  the transmission conditions and the assumption on the  data,  while in Section 3 we prove the local existence result.
Section 4 is devoted to  the  a priori estimates and to the consequent global existence and asymptotic behaviour results, under the assumption that a {\it small } stationary solution exists. In Section 5 we prove the  results of  existence of stationary solutions  in the case of acyclic networks. Finally, in Section 6 we present the global existence and asymptotic behaviour results under assumptions which ensure  the  real existence of stationary solutions.

 \end{section}

\begin{section}
{\bf Statement of the problem }

We consider a planar finite connected graph $\mathcal G=(\mathcal Z, \mathcal A)$ composed by a 
set  $\mathcal Z$ 
of $n$ nodes (or vertexes)
 and a set $\mathcal A$
of $m$ oriented arcs, $\mathcal A=\{I_i:i\in \mathcal M=\{1,2,...,m\}\}$. 
Each node 
 is a point of the plane and 
each oriented arc $I_i$ is an oriented segment  joining two nodes.

We use   $e_j$,  $j\in \mathcal J$,  to indicate the external vertexes of the graph, i.e. the vertexes belonging  to only one  arc, and by
$I_{i(j)}$
  the  external arc  incident with $e_j$.
  Moreover, we denote by   $N_\nu$, $\nu\in\mathcal N$, the internal nodes; for each of them we consider the set of incoming arcs 
$\mathcal A_{in}^\nu=\{I_i:i\in \mathcal I^\nu\}$
and the set of the outgoing ones
$\mathcal A_{out}^\nu=\{I_i:i\in \mathcal O^\nu\}$; let $\mathcal M^\nu=\mathcal I^\nu\cup\mathcal O^\nu$.

In this paper, a  {\it path} in the  graph is  a sequence of arcs, two by two adjacent,
without taking into account  orientations. Moreover, we call {\it acyclic} a graph which does not contains cycles, i.e. for each couple of nodes there exists a unique  path connecting them, whose arcs are covered only one time.

Each arc $I_i$ is considered as a one dimensional interval $(0,L_i)$.
A function $f$ defined on $\mathcal A$ is a m-tuple of functions $f_i$, $i\in \mathcal M$, each one defined 
on $I_i$. The expression $ f_i(N_\nu)$ means $f_i(0) $ if $N_\nu$ is the starting point of the arc $I_i$ and $f_i(L_i)$ if $N_\nu$ is the endpoint, and similarly for $f(e_j)$.

We set $\dsp L^p(\mathcal A):=\{f: f_i\in L^p(I_i)\}$, 
$\dsp H^s(\mathcal A):=\{f :f_i\in H^s(I_i)\}$
 and
$$\Vert f\Vert_2:=\suM \Vert f_i\Vert_2\ , \ \Vert f\Vert_\infty:=\sup_{i\in\M} \Vert f_i\Vert_\infty\ ,\ 
\Vert f\Vert_{H^s}:=\suM \Vert f_i\Vert_{H^s}.$$ 

We consider the evolution  of the following problem on the graph $\mathcal G$
\begin{equation}\label{sysi}
    \left\{ \begin{array}{l}
\partial_t \u +\lambda_i \partial_x \v=0\ ,
\\ \\
\partial_t \v +\lambda_i \partial_x \u= \u \partial_x \psi_i -\beta_i \v\ ,
\qquad     t\geq 0  , \ x\in I_i  , \  i\in \M,
\\ \\
\partial_t \psii= D_i \partial_{xx} \psi_i +a_i\u -b_i\psi_i\ ,
\end{array}\right.\end{equation}
where $ a_i\geq 0\ ,\li\ b_i,D_i,\beta_i >0 $ , 
complemented with the initial conditions
%{\it Initial conditions}
\be\label{uvic}
(u_{i0},v_{i0})\in (H^1(I_i))^2\ ,\ 
\psi_{i0}\in H^2(I_i)
\ \t{ for } i\in\M\  .\ee

In order to set boundary and transmission conditions, we introduce the following parameters: 
$$\eta_j=\left \{ \begin{array}{ll} 1& \t{ if the arc } I_{i(j)}  \t{ is incoming in } e_j\ ,  \ \ \quad    j\in \mathcal J, \\ 
-1&  \t{ if the arc } I_{i(j)} \t{ is outgoing from } e_j\ , \ \ j\in \mathcal J,\end{array} \right. $$
$$\delta_i^\nu = 1\ \t{ if  }i\in \mathcal I^\nu \ ,\  \delta _i^\nu =-1 \t{ if } i\in\mathcal O^\nu\  ,\ \ \nu\in\mathcal N. \qquad \qquad \qquad$$

The boundary conditions  for $v$, at each outer point $e_j$, are 
%{\it Boundary conditions}
\be\label{vbc}
\eta_j\lambda_{i(j)} v_{i(j)}(e_j, t)=\mathcal W_j(t)\in   W^{2,1}(0,\ov T)\ ,
\qquad 
\  \ov T>0\ , \ j\in\mathcal J\  ,
\ee
while for $\psi$ we set  the Robin boundary conditions 
\be\label{phibc}
\eta_j D_{i(j)} {\partial_x\psi_{i(j)}}(e_j, t) + d_j \psi_{i(j)}(e_j,t)
=\mathcal  P_j(t)\in        H^{2}(0,\ov T)  
 \ ,\   d_j\geq 0\ ,\ \ov T>0\ , \ j\in \mathcal J .\ee

%{\it Transmission conditions I}

%Kedem-Catchalsky conditions for  $\phii$

In addition, at each internal node $N_\nu$ we impose the following transmission conditions for the unknown $\psi$
\be\label{KC}\left\{\begin{array}{ll}\displaystyle
\delta_i^\nu \Di \partial_x \psi_i(N_\nu,t)=  \sum_{j\in\M^\nu} \alpha_{ij}^\nu(\psi_j(N_\nu,t)-\psii(N_\nu,t))\  \ ,\ i\in \M^\nu \ ,\  t> 0\ ,
\\ 
\alpha_{ij}^\nu\geq 0\ ,\ \alpha_{ij}^\nu=\alpha_{ji}^\nu \ \t{ for all } i,j\in \M^\nu\ ,
\end{array}\right.\ee
%{\it Transmission conditions II}
and the following ones for  the unknowns $v$ and $u$ 
\be\label{TC}\left\{\begin{array}{ll}\dsp
- \delta_i^\nu \lai\v(N_\nu,t)=\sum_{j\in\M^\nu}
\sigma_{ij}^\nu \left(u_j(N_\nu,t)-\u(N_\nu,t)\right)\ ,\ i\in \M^\nu\ ,\ t>0\ ,\\ 
\sigma_{ij}^\nu\geq 0\ ,\sigma_{ij}^\nu=\sigma_{ji}^\nu \ \t{ for all } i,j\in \M^\nu\ .
\end{array}\right.\ 
\ee

Motivations for the above constraints on the coefficients in the transmission conditions can be found in \cite{noi} . These kind of transmission conditions were introduced in \cite{KK} in a parabolic model for the description of passive transport through biological membranes and they are known as Kedem-Katchalsky permeability conditions. 

Finally, we impose the following compatibility conditions
 \be\label{compat} 
u_{i0},v_{i0}, \psi_{i0}\t{  satisfy conditions (\ref{vbc})-(\ref{TC})  
for all } i\in\M\ .
\ee

First we are going to prove that the problem (\ref{sysi})-(\ref{compat}) has a unique local solution
 $$u,v \in C([0,T];H^1(\mathcal A))\cap C^1([0,T];L^2(\mathcal A))\ ,$$
$$\psi \in C([0,T];H^2(\mathcal A))\cap C^1([0,T];L^2(\mathcal A))\cap H^1((0,T);H^1(\A))\ ,$$ 
for some $T>0$.

On the other hand, the proofs of the existence  of global solutions
and of the existence  of stationary solutions on acyclic graphs, carried out in the last sections,  require the following further 
 conditions on the transmission coefficients
\be\label{nd} \t{ for all }\nu\in\mathcal N, \t{ for some } k\in\M^\nu, 
\sigma_{ik}^\nu\neq 0 \t{  for all } i\in\M^\nu, i\neq k\ ,
\ee
in addition to suitable smallness and smoothness assumptions on the data.

Finally, we remark that the transmission conditions (\ref{KC})
 imply the continuity of the flux  of $\psi$ at  each node, for all $t>0$,
\be\label{contpsi}\suEn D_i \partial_x\psi (N_\nu,t)=\suUn D_i \psix(N_\nu,t)\\ , \ee
and the conditions (\ref{TC})
ensure the conservation of the 
flux of the density of cells at each node $N_\nu$ , for $t>0$,
\be\label{consv}
\suEn \lai\v(N_\nu,t)=\suUn\lai\v(N_\nu,t)\ ,
\ee
which corresponds to the following condition for the evolution in time of the total mass
$$\suM\iIi\u(x,t)\ dx=\suM\iIi\uo(x)\ dx - \sum_{j\in\mathcal J} \int_0^t \mathcal W_j(s) ds\ .$$

\end{section}

\begin{section} 
{\bf  Local  solutions } 

In order to prove the existence and the uniqueness of a local solution to problem (\ref{sysi})-(\ref{compat}) we need to introduce some auxiliary functions.

We introduce the functions $\mathcal V(x,t)$ and $\Phi(x,t)$, defined on the network as follows
$$\left \{
\begin{array}{ll}\mathcal \dsp \mathcal V_i(x,t), \Phi_i(x,t)=0 \qquad \qquad\qquad { \t{ if }I_i    \t{ is an internal arc,}} \\ \\
\dsp\eta_j\lambda_{i(j)}\mathcal V_{i(j)}(x,t)= \frac{\mathcal W_j(t)}{L_{i(j)}}\left(\eta_jx +\frac {1-\eta_j}2 L_{i(j)}\right) &
 \t{ for all } j\in\mathcal J , \\  \\
\dsp \eta_jD_{i(j)}\Phi_{i(j)}(x,t)= \frac{\mathcal P_j(t)}{L^2_{i(j)}} x (x-L_{i(j)}) \left(x+ \frac {\eta_j-1}2L_{i(j)}\right) & \t{ for all } j\in\mathcal J \ ,\end{array} \right.$$ 
where $\eta_j$ is defined in the previous section.

Let  the triple $(u,v,\psi)$ be a solution to 
(\ref{sysi})-(\ref{compat}) and let 
\be\label{trasl} w := v-\mathcal V , \qquad   \phi =\psi-\Phi\ ;\ee
 the triple $(u,w , \phi)$ satisfies  the following system
\begin{equation}\label{ss}
    \left\{ \begin{array}{l}
\partial_t \u +\lambda_i \partial_x w_i= -\li \partial_x\mathcal V_{i}  
\\ \\
\partial_t w_i +\lambda_i \partial_x \u= \u (\partial_x \phi_i+\partial_x \Phi) -\beta_i w_i -\partial_t\mathcal V_i -\beta_i\mathcal V_i
\\ \\
\partial_t \phi_i= D_i \partial_{xx} \phi_i +a_i\u -b_i \phi_i -\partial_t\Phi_i +D_i \partial_{xx} \Phi_i -b_i \Phi_i\ 
 , 
\end{array}\right.\end{equation}
for $x\in I_i$, $i\in\M$,  $t>0$, with the initial conditions
\be\label{iiic}( u_i(x,0),  w_i(x,0),  \phi_i(x,0))=(u_{i0}(x),w_{i0}(x),\phi_{i0}(x))\ ,\ee
where $w_{i0}(x):= v_{i0}(x)-\mathcal V_i(x,0)  ,\phi_{i0}(x):=\psi_{i0}(x)-\Phi_i(x,0)\ ,$
the boundary conditions
\be\label{vbch}
\eta_j\lambda_{i(j)} w_{i(j)}(e_j, t)=0 ,
\qquad 
\  t>0\ , \ j\in\mathcal J\  ,
\ee
\be\label{phibch}
\eta_j D_{i(j)} {\partial_x \phi_{i(j)}}(e_j, t) + d_j \phi_{i(j)}(e_j,t)
=0
 \ ,\    	 d_j\geq 0\ ,   \  t>0 \ ,\ j\in \mathcal J\ , \ee
and  transmission conditions 
\be\label{KC2}\displaystyle
\delta_i^\nu \Di \partial_x \phi_{i}(N_\nu,t)=  \sum_{j\in\M^\nu} \alpha_{ij}^\nu(\phi_j(N_\nu,t)-\phi_i(N_\nu,t))\  \ ,\ i\in \M^\nu \ ,\  t> 0\ ,\ee
\be\label{TC2}\dsp
- \delta_i^\nu \lai w_i(N_\nu,t)=\sum_{j\in\M^\nu}
\sigma_{ij}^\nu \left(u_j(N_\nu,t)-\u(N_\nu,t)\right)\ ,\ i\in \M^\nu\ ,\ t>0\ , \ee
where $ \alpha_{ij}^\nu$ and $\sigma_{ij}^\nu$ are as in (\ref{KC}) and (\ref{TC}).

We are going to prove an existence and uniqueness result for local solutions to the above problem; as a consequence we will obtain the result for problem  (\ref{sysi})-(\ref{compat}).
 
  Let  $ X:=(L^2(\A))^2\ $ and $ Y:= (H^1(\A))^2\ $;
we consider the unbounded operator  $A_1:D(A_1)\to X$:
$$ \begin{array}{ll}
\dsp D(A_1)= \{\mathcal U=(u,w)\in Y
: (\ref{vbch}),(\ref{TC2}) \t{ hold }\}\\ \\
 A_1 \mathcal U=\{(-\lambda_i w_{ix}, -\lambda_i u_{ix})\}_{i\in \M}\ ;
\end{array}$$
moreover we introduce the  unbounded operator
$A_2:D(A_2)\to L^2(\A)$,
\be\label {da2} \begin{array}{ll}
D(A_2)= \left\{\phi\in H^2 (\mathcal A): (\ref{phibch}), (\ref{KC2}) \t{ hold }\right\}\\  \\
A_2\phi=\{D_i\phi_{ixx}-b_i\phi_i\}_{i\in\M}\ .
\end{array}\ee

\bp\label{mdiss}
$A_1$ and $A_2$  are m-dissipative operators.
\end{prop}

\begin{proof}

The proof for the operator
$A_1$ can be achieved as in \cite{noi}    (see the proof of Proposition 4.2), taking into account that the transmission conditions imply 
\be\label{444}\begin{array}{ll}
\dsp \sum_{\nu\in\mathcal N} \sum_{i\in\M^\nu} \delta_i^\nu\li u_i(N_\nu) w_i(N_\nu)=
\dsp  \sum_{\nu\in \mathcal N} \sum_{i,j\in\M^\nu}\frac{ \sigma^\nu_{ij}} 2\left( u_j(N_\nu,t)-\u(N_\nu,t)\right)^2
.  \end{array}\ee

For the operator  $A_2$ we notice that 
the transmission conditions (\ref{KC2}) imply that 
\be \label{coer}\begin{array}{ll}\dsp\sum_{\nu\in\mathcal N} \sum_{i\in\M^\nu} \delta_i^\nu D_i \phi_{i_x} (N_\nu)\phii(N_\nu)
=
\dsp-\sum_{\nu\in\mathcal N}\sum_{ij\in \M^\nu} \frac{\alpha^\nu_{ij}}{2} (\phi_j(N^\nu,t)-\phii(N^\nu,t))^2 ,
\end{array} \ee
while the homogeneous Robin boundary conditions provide the equality
$$\sum_{j\in\mathcal J} \eta _jD_{i(j)} \partial_x\phi_{i(j)} (e_j,t) \phi_{i(j)}(e_j,t)= -\sum_{j\in\mathcal J} d_j \phi^2_{i(j)}(e_j,t)\ \ , j\in\mathcal J .$$
Then by standard methods, $A_2$ reveals to be a dissipative operator \cite{CH}.
In order to prove that the operator in m-dissipative, we introduce the bilinear form $a(\phi,\chi):(H^1(\mathcal A))^2 \to \R$
$$ \begin{array} {ll}\dsp
a(\phi,\chi)=\suM\iIi \left(D_i\phx\chi_{ix} +(1+b_i) \phii\chi_i\right)\ dx
\\ \\ \dsp
-\sum_{\nu\in \mathcal N}\sum_{i,j\in \M^\nu}\alpha^\nu_{ij}\left(\phi_j(N_\nu) -\phii(N_\nu)\right) \chi_i(N_\nu)
+\sum_{j\in\mathcal J} d_j \phi_{i(j)}(e_j)\chi_{i(j)}(e_j)\ ;
\end{array}$$
the form is continuous and coercive, hence,
by the Lax-Milgram theorem, we know  that, for each $\varphi\in L^2(\mathcal A)$, there exists a unique
$\phi\in H^1(\mathcal A)$ such that, for all $\chi\in H^1(\mathcal A)$ it holds
$$a(\phi,\chi)=\suM\iIi \varphi_i\chi_i\ dx ;$$
taking $\chi_i\in H^1_0(I_i)$ for all $i\in \M$, we obtain that 
$\phx\in H^1(I_i)$, then
taking $\chi_i\in C^\infty_0(I_i)$, as in \cite{noi}, we prove the equality 
$$-D_i \phxx +(1+b) \phii =\varphi_i\quad a.e.\t{ for all }i\in \M, $$
 moreover, thanks to suitable choices of $\chi_i(N), \chi_i(a_i)$,
we obtain that $\phi$ satisfies the right boundary and transmission conditions to belong to $D(A_2)$.
\end{proof}

Thanks to the above proposition we conclude that the operator $A_1$   is the generator of a contraction semigroup $\T_1$
in $(L^2(\A))^2$ while  the operator $A_2$   is the generator of a contraction semigroup $\T_2$
in $L^2(\A)$.

It is easy to prove a uniqueness result for solutions to the problem (\ref{ss})-(\ref{TC2}):
if  we assume that $(u,w,\phi),(\ov u,\ov w,\ov\phi)$ are two solutions such that  $\phi,\ov \phi\in C([0,T];H^2(\A))\cap C^1([0,T];L^2(\A))$ and 
$(u,w),(\ov u,\ov w) \in  C([0,T];Y)\cap C^1([0,T];X)$,
taking into account (\ref{444}) and (\ref{coer}), by standard methods, we have for $t\in [0,T]$,
$$
\Vert u(t)-\ov u(t)\Vert_2^2 +\Vert w(t)-\ov w(t)\Vert_2^2 +\iot \Vert w(s)-\ov w(s)\Vert_2^2  ds $$
$$ \leq c_1  \iot\left(\Vert \phi_{x}(s)-\ov \phi_{x}(s)\Vert_2^2 +\Vert u(s)-\ov u(s)\Vert_2^2 \right) ds$$
 and
 $$
\Vert \phi(t)-\ov \phi(t)\Vert_2^2 +\iot\left( \Vert\phi(s)-\ov \phi(s)\Vert_2^2+ \Vert\phi_{x}(s)-\ov \phi_{x}(s)\Vert_2^2\right)  ds
\leq 
c_2\iot \Vert u(s) -\ov u(s)\Vert_2^2 ds\ ,$$
where the constants $c_1, c_2$ depend on $\dsp\soT\Vert u(t)\Vert_{H^1}, \soT \Vert\ov  \phi_x(t)\Vert_{H^1}$, on $\Phi$ and on the parameters $a_i,b_i, D_i, \li, \beta_i$; then the result follows by Gronwall Lemma.

In order to prove the local existence theorem we need some preliminary results.
Let $f\in C([0,T]; H^2(\A)) \cap H^1((0,T);H^1(\A)) $ and   $g=(g_1,g_2)\in C([0,T];Y)\cap C^1([0,T];X)$, we set
\be\label{Ffg}F_{fg}(t)=\{ (0\ ,\ { f_i}_x(t)g_{1i}(t)+\Phi_{ix}(t)g_{1i}(t)-\beta_i g_{2i}(t) )\}_{i\in\M}	.\ee
\begin{lemma}
\label{co} Let  $\ov g,g\in C([0,T];Y)\cap C^1([0,T];X)$, 
$\ov f,f\in C([0,T]; H^2(\A)) \cap H^1((0,T);H^1(\A))$
and 
$$\dsp \soT\Vert  f(t)\Vert_{H^2}+\left( \ioT \Vert f'_x(t)\Vert_2^2 dt\right)^{\frac 1 2} \ ,\ 
\soT\Vert  \ov f(t)\Vert_{H^2}+\left( \ioT \Vert \ov f'_x(t)\Vert_2^2 dt\right)^{\frac 1 2}
  \leq K .$$
 Then there exist two positive constants
$L_{1K}, L_{2K}$, depending  on  $K$ 
, such that 
$$\dsp\soT\Vert F_{fg}(t)- F_{\ov f\ov g}(t)
\Vert_X \leq 
 L_{1K}\dsp \soT \Vert g(t)-\ov g(t)\Vert_X \\ \\
  +\dsp \soT \Vert \ov g_1(t)\Vert_{\infty}   \dsp\soT \Vert f_x(t)-\ov f_x(t)\Vert_2\ ,$$
%\ee
and 
\be\label{co2}  \begin{array}{ll} \dsp \int_0^T \Vert  F'_{fg}(t)-  F'_{\ov f\ov g}(t)\Vert_X dt
\leq \sqrt T L_{2K} \left(\soT \Vert g(t)-\ov g(t)\Vert_Y +\sqrt T \Vert g'(t)-\ov g'(t)\Vert_X\right) \\ \\
 +\dsp \sqrt T \soT\Vert \ov g_1(t)\Vert_\infty \left(\ioT \Vert f_x'(t)-\ov f_x'(t)\Vert_2^2 dt\right)^{\frac 1 2 }
+ \dsp T \soT \Vert \ov g'_1(t)\Vert_2  \soT \Vert f_x(t)-\ov f_x(t)\Vert_{\infty}.
\end{array}\ee
\end{lemma}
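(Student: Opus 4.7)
The plan is to establish both estimates by the same telescoping device: in each bilinear product appearing in the difference $F_{fg}-F_{\bar f\bar g}$, I add and subtract a mixed term so as to isolate one factor containing only $g-\bar g$ (respectively $g'-\bar g'$) and one containing only $f-\bar f$ (respectively $f'-\bar f'$), and then estimate each factor in a norm suited to the available regularity. Since the first component of $F_{fg}$ vanishes identically, only the second component $(F_{fg})_{2i}=f_{ix}g_{1i}+\Phi_{ix}g_{1i}-\beta_ig_{2i}$ contributes, and the constants $L_{1K},L_{2K}$ are allowed to absorb the arc parameters $\beta_i$, the fixed auxiliary function $\Phi$ (smooth and bounded thanks to the $H^2(0,\bar T)$-regularity of the $\mathcal P_j$), and the uniform bound $K$ on $f$ and $\bar f$.

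For the first inequality, after the telescoping $f_{ix}g_{1i}-\bar f_{ix}\bar g_{1i}=f_{ix}(g_{1i}-\bar g_{1i})+(f_{ix}-\bar f_{ix})\bar g_{1i}$, I apply H\"older on each arc together with the one-dimensional Sobolev embedding $H^1(I_i)\hookrightarrow L^\infty(I_i)$: the bound $\|f_{ix}\|_\infty\leq c\,\|f\|_{H^2}\leq cK$ absorbs the first summand into $L_{1K}\sup_{[0,T]}\|g-\bar g\|_X$, while the second summand is kept in the form $\sup_{[0,T]}\|\bar g_1\|_\infty\,\|f_x-\bar f_x\|_2$, yielding exactly the stated two-term shape. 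The linear pieces coming from $\Phi_{ix}g_{1i}$ and $\beta_ig_{2i}$ are estimated directly and absorbed into the $L_{1K}\sup_{[0,T]}\|g-\bar g\|_X$ contribution.

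For (\ref{co2}) I differentiate in time to get $(F'_{fg})_{2i}=f'_{ix}g_{1i}+f_{ix}g'_{1i}+\Phi'_{ix}g_{1i}+\Phi_{ix}g'_{1i}-\beta_ig'_{2i}$, then telescope each bilinear product as before, producing four mixed terms per product. Integration over $[0,T]$ is handled by Cauchy--Schwarz: $\int_0^T\|f'_{ix}\|_2\,\|g_{1i}-\bar g_{1i}\|_\infty\,dt$ becomes $\sqrt T\cdot K\cdot\sup_{[0,T]}\|g-\bar g\|_Y$ (using $H^1\hookrightarrow L^\infty$ on the second factor and the $L^2$-in-time bound $K$ on the first), while $\int_0^T\|f'_{ix}-\bar f'_{ix}\|_2\,\|\bar g_{1i}\|_\infty\,dt$ becomes $\sqrt T\,\sup_{[0,T]}\|\bar g_1\|_\infty\bigl(\int_0^T\|f'_x-\bar f'_x\|_2^2\,dt\bigr)^{1/2}$. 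Terms containing $g'-\bar g'$ reduce to $T\,\sup_{[0,T]}\|g'-\bar g'\|_X$ after pulling $\|f_{ix}\|_\infty$ out in $L^\infty$-in-time, giving the $\sqrt T\cdot\sqrt T\,L_{2K}\|g'-\bar g'\|_X$ contribution.

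The only delicate bookkeeping point -- and the step I expect to require most care -- is the mixed term $(f_{ix}-\bar f_{ix})\bar g'_{1i}$: since only $\bar g'\in C([0,T];X)$ is available, $\bar g'_{1i}$ must stay in $L^2$, which forces $f_x-\bar f_x$ to be taken in $L^\infty$ in both space (via $H^1\hookrightarrow L^\infty$) and time, producing exactly the last summand $T\,\sup_{[0,T]}\|\bar g'_1\|_2\,\sup_{[0,T]}\|f_x-\bar f_x\|_\infty$. All remaining contributions, including those originating from $\Phi'_{ix}$ and $\Phi_{ix}$, fit cleanly into the $\sqrt T\,L_{2K}$ umbrella once one invokes the $H^2(0,\bar T)$-regularity of the parabolic boundary data to bound $\Phi$ and $\Phi'$ uniformly; summing over $i\in\mathcal M$ assembles the full inequality.
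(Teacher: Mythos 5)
Your proposal is correct and matches the paper's own proof essentially step for step: the same telescoping decomposition of each bilinear product, the one-dimensional embedding $H^1(I_i)\hookrightarrow L^\infty(I_i)$ to bound $\Vert f_x\Vert_\infty\leq c_S K$, Cauchy--Schwarz in time to produce the $\sqrt T$ factors in (\ref{co2}), and the identical treatment of the delicate mixed term $(f_x-\ov f_x)\,\ov g_1'$, which forces the final summand $T\,\soT\Vert \ov g_1'\Vert_2\,\soT\Vert f_x-\ov f_x\Vert_\infty$. The absorption of the $\Phi$-terms and $\ov\beta$ into $L_{1K}$ and $L_{2K}$ also agrees with the paper's choice of constants, so there is nothing to correct.
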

\begin{proof}
We have
$$\begin{array}{ll}\dsp\soT\Vert F_{fg}(t)- F_{\ov f\ov g}(t)
\Vert_X \leq  \left(\dsp \soT \Vert f_x(t)\Vert_\infty+
\dsp\soT \Vert \Phi_x(t)\Vert_{\infty}\right) \soT \Vert g_1(t)-\ov g_1(t)\Vert_2\\ \\
+\ov\beta  \dsp\soT \Vert g_2(t)-\ov g_2(t)\Vert_2  +
\dsp  \soT \Vert \ov g_1(t)\Vert_{\infty} \dsp \soT \Vert f_x(t)-\ov f_x(t)\Vert_2 \ ,
\end{array}$$
where $\ov \beta:=\max\{\beta_i\}_{i\in\M}$; then the first inequality in the claim follows with  
$ L_{1K}=    c_S K  +\dsp\soT\Vert \Phi_x\Vert_\infty +\ov \beta $, where $c_S$ is a Sobolev constant.

As regard to the second inequality we have
$$\int_0^T \Vert  F'_{fg}(t)-   F'_{\ov f\ov g}(t)\Vert_X \leq 
  \left( \ioT \Vert g_1(t)-\ov g_1(t)\Vert^2_{\infty} dt\right)^{\frac 1 2} \left(\ioT \Vert f_x'(t)+ \Phi'_{x}(t)\Vert_2^2 dt\right)^{\frac 1 2 } 
$$
$$
+ \left( \ioT \Vert \ov g_1(t)\Vert^2_{\infty} dt\right)^{\frac 1 2} \left(\ioT  \Vert f_x'(t)-\ov f'_x(t)\Vert_2^2 dt \right)^{\frac 1 2 } 
+T \ov \beta \soT \Vert g_2'(t)-\ov g_2'(t)\Vert_2
$$ $$
 + T\left(  \soT \Vert f_x(t)\Vert_{\infty} + \soT\Vert\Phi_x(t)\Vert_{\infty}\right) \soT \Vert g_1'(t)-\ov g_1'(t)\Vert_2 + 
 $$ 
 $$
  +T 
\soT \Vert \ov g'_1(t)\Vert_2  \soT \Vert f_x(t)-\ov f_x(t)\Vert_{\infty}
$$
$$\leq
 \sqrt T c_S \left(K+\Vert\Phi \Vert_{H^1((0,T);H^1)}\right)\soT \Vert g(t)-\ov g(t)\Vert_Y
$$
$$ +T  (c_S (K+ \dsp\soT\Vert\Phi_x(t)\Vert_{H^1})+\ov \beta)  \soT \Vert g'(t)-\ov g'(t)\Vert_X$$ $$
+\sqrt T \dsp \soT\Vert \ov g_1(t)\Vert_\infty \left(\ioT \Vert f_x'(t)-\ov f_x'(t)\Vert_2^2\right)^{\frac 1 2 }
+ T \soT \Vert \ov g'_1(t)\Vert_2  \soT\Vert f_x(t)-\ov f_x(t)\Vert_{\infty}\ ,
$$
where $c_S$ are Sobolev constants; then, setting $ L_{2K}=    c_S (K  +c_\Phi)+\ov\beta$,  where $c_\Phi$ 
is a suitable positive quantity depending on $\Phi$,
we obtain the second inequality.

\end{proof}

\bt \label{le}{\it (Local existence)}
There  exists a unique local solution $(u,v,\psi)$ to problem (\ref{sysi})-(\ref{compat}),
$$\begin{array}{ll}
(u,v)\in C([0,T];(H^1(\mathcal A))^2)\cap C^1([0,T],(L^2(\mathcal A))^2) ,\\ \\ 
\psi \in C([0,T];H^2(\mathcal A))\cap C^1([0,T],L^2(\mathcal A))\cap H^1((0,T);H^1(\mathcal A))
\ .
\end{array}$$
\et
\begin{proof}
We set 
$\ov a:=\dsp \max_{i\in\M} a_i $, $ \underline b:=\dsp\min_{i\in\M} b_i $, $\overline b:=\dsp\max _{i\in\M} b_i $ and $\underline D:=\dsp\min_{i\in\M} D_i$.

We consider the problem (\ref{ss})-(\ref{TC2}) and we set $\U_0:=(u_0,w_0)$
and 
$$Z_i(t)=(Z_{1i}(t),Z_{2i}(t))):=(-\lambda_i \mathcal V_{ix}, 	-\mathcal V_{it} -\beta_i \mathcal V_i)\ ,$$ 
$$Z_{3i}(t):=  -\partial_t\Phi_i +D_i \partial_{xx} \Phi_i -b_i \Phi_i.  $$
Fixed $\ov T>0$, we set 
$$M\geq 2\left(\left( 1+\Vert {\phi_0}_x \Vert_{\infty}+\Vert\Phi_x\Vert_\infty+\ov\beta \right) \Vert \U_0\Vert _{D(A_1)} 
+\Vert Z(0)\Vert_X
+  \Vert Z\Vert_{W^{1,1}((0,\ov T); X)}\right) ,$$
$$K_1= \Vert \phi_0\Vert_{D(A_2)} +\ov T3\ov aM +2 \Vert Z_3\Vert _{W^{1,1}((0,\ov T);L^2(\A))}  +2(\ov a\Vert u_0\Vert_2+
\Vert Z_3(0)\Vert_2),$$
 $$K_2=\frac{1}{2\underline D}\left( \Vert \phi_0\Vert_{D(A_2)}+\ov a \Vert  u_0\Vert_2+ \Vert Z_3(0)\Vert_2\right)^2
 +  \frac {1} {2\underline b\,  \underline D}\left(  \ov T\ov a^2 M^2+
  \Vert {Z_3}\Vert^2_{H^1(0,\ov T);L^2(\A)) }\right) ,$$
$$K\geq \left (1 +\frac {1+\ov b}{\underline D} \right)K_1+\sqrt K_2 \ .
 $$
Let $L_{1K}, L_{2K}$ be the constants in Lemma \ref{co} and let $T\leq\ov T$.

Let consider the set
\be \label{bmk}B_{MK}=\left\{
\begin{array} {ll}\dsp
\U=(u,w)\in  (C([0,T];(H^1(\mathcal A))^2) \cap C^1([0,T];(L^2(\A))^2)\ , \\ \\ \phi\in C([0,T];H^2(\A))\cap C^1([0,T];L^2(\A))\cap H^1((0,T);H^1(\A)) \ :\\ \\
\U(0)=(u_0,w_0) ,\phi(0)=\phi_0\ , \\ \\
  \dsp \soT\Vert(\U(t)\Vert_X ,\ \ \   \soT\Vert(\U'(t))\Vert_X\leq M\ ,\\ \\ 
  \dsp \soT \Vert A_1 \mathcal U(t)\Vert_X \leq (1+L_{1K})M+\soT \Vert Z(t)\Vert_X\ ,
\\ \\ \dsp \soT\Vert \phi(t)\Vert_{H^2}\leq   \left(1+\frac {1+\ov b}{\underline D}\right)K_1,\ \ \ 
  \dsp
 \int_0^T \Vert\phi'_{x}(t) \Vert_2^2 \, dt   \leq   K_2
 \end{array}
\right\}\ ;\ee
we equip $B_{MK}$ with the metric generated by the norms of the involved spaces, obtaining a  complete metric space .

We  define a map $G$ on $B_{MK}$ in the following way:
given $(\mathcal U^I, \phi^I)=(u^I,w^I,\phi^I)\in B_{MK}$ , then $(\mathcal U,\phi)= G(\mathcal U^I, \phi^I)$ 
is such that 
$\phi$ is the solution to
\be\label{A2} \left\{\begin{array}{ll}
 \phi \in C([0,T];D(A_2))\cap C^1([0,T];L^2(\A)) \\ \\
\phi'(t) =A_2 \phi(t) + a u^I(t) +Z_3(t)\ ,\quad t\in[0,T]\ ,\\ \\
\phi(0)=\phi_0\in D(A_2)\ ,
\eaa\right.\ee
where $au^I(t):=\{a_i u^I_i(t)\}_{i\in\M}$,  and $\U$ is the solution to 
\be\label{A1} \left\{\begin{array}{ll}
\mathcal U \in C([0,T];D(A_1))\cap C^1([0,T];X) \\ \\
\mathcal U'(t) =A_1 \mathcal U(t) +F_{\phi \U^I}(t) + Z(t)\ ,\quad t\in[0,T]\ ,\\ \\
\mathcal \U(0)=(u_0,w_0)\in D(A_1)\ ,
\eaa\right.\ee
where we used the notation  (\ref{Ffg}).

First we prove that  $G$ is well defined and $G(B_{MK})\subseteq B_{MK}$.
Since  $\U^I\in C([0,T];Y) \cap C^1([0,T];X)$, 
$ Z_3\in H^{1}((0,T);L^2(\A))$ 
we can use the theory for nonhomogeneous problems in \cite{CH} and we infer the existence and uniqueness of a solution
 $\phi$ to problem  (\ref{A2})
given by
$$\phi(t) =\T_2(t) \phi_0 +\iot \T_2(t-s) (au^I(s)+Z_3(s)) ds\ ,$$
see\cite{CH}. If we set 
$$\dsp\mathcal F(t):=\iot \T_2 (t-s) (au^I(s)+Z_3(s)) \ ds\ ,
\ $$ 
the assumption on $u^I$ and $Z_3$ imply that 
$\mathcal F \in C^{1}([0,T]; L^2(\mathcal A))\cap  C([0,T]; D(A_2))$,
$A_2 \mathcal F(t)=\mathcal F'(t)-au^I(t)-Z_3(t)$ for all $t\in [0,T]$ (see \cite{CH})
and
$$ \mathcal F'(t)= \iot  \T_2(t-s)(a{u^I}'(s)+Z'_3(s)) \ ds + \mathcal T_2(t)(a u^I(0)+Z_3(0)) \ .$$

Then we have
$$
\Vert \phi(t)\Vert_{D(A_2)} \leq \Vert \phi_0\Vert_{D(A_2)} +
\Vert \mathcal F(t)\Vert_{2}+\Vert A_2\mathcal  ( \mathcal F(t) \Vert_{2}$$
$$\leq  \Vert \phi_0\Vert_{D(A_2)}+\ov a \Vert u^I(t)-\mathcal T_2(t)u^I(0)\Vert_2 +  \Vert Z_3(t)-\mathcal T_2(t)Z_3(0)\Vert_2$$
$$+
 \iot  \left( \ov a\Vert u^I(s)\Vert_2+  \Vert  Z_3(s)\Vert_2+ \ov a\Vert ({u^I}'(s)\Vert_2+ \Vert  Z'_3(s)\Vert_2\right) ds
$$
$$\leq     \Vert \phi_0\Vert_{D(A_2)}
+\ov aT\left(  2\soT    \Vert {u^I}'(s)\Vert_2
+  \soT\Vert u^I(s)\Vert_2\right) $$
$$+ 2\ov a \Vert u(0)\Vert_2+ 2\Vert Z_3(0)\Vert_2+2 \Vert Z_3\Vert _{W^{11}(0,T);L^2(\A))}\leq K_1\ ,$$
whence the first inequality for $\phi$ in $B_{MK}$ follows.

Moreover, let $0<t_1<t_2<T$ and 
let $\Delta^h \chi(t):= \chi(t+h)-\chi(t)$; using the equation in (\ref{A2}) we can write
$$\int_{t_1}^{t_2}\iIi\left((\Delta^h \phii)_t \Delta^h \phii - D_i (\Delta^h \phii)_{xx}\Delta^h\phii 
\right) dx dt=$$
$$\int_{t_1}^{t_2}\iIi\left(
a_i\Delta^h u^I_i\Delta^h \phii-b_i(\Delta^h \phii)^2 +\Delta^hZ_3 \Delta^h\phi  \right) \, dx dt \ ;$$
then we have
\be\label{uuu}\begin{array}{ll}\dsp
\suM \left(\iIi \frac{(\Delta^h \phii(t_2))^2} 2\, dx+
\int_{t_1}^{t_2}\iIi D_i(\Delta^h\phx)^2\ dx dt\right) \\ \\ \leq  \dsp
 \sum_{\nu\in\mathcal N}\sum_{i\in\M^\nu}  \int_{t_1}^{t_2}\delta_\nu^i D_i(\Delta^h \phx) (\Delta^h \phii) (N_\nu,t)  dt \\ \\
+\dsp \sum_{j\in\mathcal J} \int_{t_1}^{t_2} \eta_j D_{i(j)} (\Delta^h \phi_{i(j)_x}) (\Delta^h \phi_{i(j)}) (e_j,t)  dt\\ \\
\dsp+ \suM \iIi \frac {(\Delta^h \phii(t_1))^2} 2 dx +\suM
\int_{t_1}^{t_2}\iIi \left(  \frac{(a_i\Delta^h u^I_i)^2 +(\Delta^h Z_{3i})^2}{2b_i}  
\right)dx dt  \ .
\end{array}\ee

Since the first and the second terms on the
right hand side are  non positive and  $u^I,\phi\in C^1([0,T]; L^2(\A))$ and $Z_3\in H^1(0,T);L^2(\A))$, the above inequality implies that 
$\phi\in H^1((0,T); H^1(\A))$; moreover, 
for $h\to 0$ and then  $t_1\to 0$, $t_2\to T$,  we have
$$\suM\int_0^T\iIi D_i {\phi_i}_{xt}^2 \ dx dt\leq 
  \frac 1 2 \left(\Vert \phi_0\Vert _{D(A_2)} +  \ov a  \Vert  u_0\Vert_2 +   \Vert  Z_3(0)\Vert_2 \right)^2$$
$$\qquad+ \frac {1} {2 \underline b} \left(T\ov a^2 \soT   \Vert  {u^I}'(t)\Vert_2^2+
  \Vert {Z_3}\Vert_{H^1((0,T);L^2(\A))}^2\right)\ ,
$$
so that $\phi$ satisfies the last condition in (\ref{bmk}).

Now we consider the problem (\ref{A1}) and we set
$F(t):= F_{\phi \U^I}(t)$. We know that $Z\in  W^{1,1}((0,T);X)$ and, from Lemma \ref{co}, 
 $F\in W^{1,1}((0,T);X)$, then there exists a unique solution
$\U\in $ given by
$$\U(t) =\T_1(t) \mathcal U_0 +\iot \T_1(t-s) (F(s) +Z(s)) ds$$
see \cite{CH};
moreover, using  Lemma \ref{co} and choosing $T\leq(2L_{1K})^{-1}$,
we obtain the following inequality
$$\Vert\U(t)\Vert_X \leq \Vert \mathcal U_0\Vert_X +\dsp\iot \Vert F(s) + Z(s)\Vert_X ds $$
$$ \leq \Vert \mathcal U_0\Vert_X +T L_{1K} \soT\Vert \U^I(t)\Vert_X +  \Vert Z\Vert_{L^1((0,T);X)} \leq M\ ;
$$
then we can argue as we did before for $\phi$, using \cite{CH} and 
   Lemma \ref{co},  to obtain  
\be\label{ut}\begin{array}{ll}\Vert \U'(t)\Vert_X\leq \Vert A_1 \mathcal U_0\Vert_X + \Vert F(0)+Z(0)\Vert_X+\dsp \iot  \Vert F'(s) +Z'(s)\Vert_X ds\\ \\
\dsp \leq \Vert A_1 \mathcal U_0\Vert_X  + (\Vert {\phi}_x(0)\Vert_\infty+\Vert\Phi_x(0)\Vert_\infty+
\ov \beta) \Vert \mathcal U_0\Vert_X + \Vert Z(0)\Vert_X \\ \\
+\sqrt T L_{2K} \left(\dsp\soT \Vert u^I(t)\Vert_Y +\sqrt T\dsp \soT \Vert {\U^{I'}(t)}\Vert_X\right) +   \Vert Z\Vert_{W^{1,1}((0,T);X)} 
\end{array}\ee
which implies, choosing $T$ sufficiently small and setting $\underline \lambda:=\dsp\min_{i\in\M} \li$,
 $$\soT \Vert \U'(t)\Vert_X \leq \frac M 2 + \sqrt T L_{2K}\left( \left(1+\frac{1+L_{1K}}{\underline \lambda}  +\sqrt T \right)M +
 \frac{\dsp \soT \Vert Z(t)\Vert_X}{\underline \lambda} \right)\leq M\ .$$
 Finally, using Lemma \ref{co},
 $$ \soT \Vert A_1 \U(t)\Vert_X \leq\soT \Vert \U'(t)\Vert_X+\soT\Vert  F(t)+Z(t) \Vert _X 
  \leq (1+L_{1K}) M +\soT \Vert Z(t)\Vert_X.$$
 
 The above computations show that $(\U,\phi)=G(\U^I,\phi^I) \in B_{MK}$ if $T$ is small enough.

Now we are going to prove that $G$ is a contraction mapping on $B_{MK}$,  for small values of $T$.

Let 
$$(\U^I,\phi^I)= (u^I, w^I,\phi^I)\ ,\ (\ov\U^I,\ov\phi^I)=(\ov u^I,\ov w^I, \ov \phi^I)\in B_{MK}\ ,$$
$$(\U,\phi)=G( \U^I,\phi^I)\ ,\  (\ov\U,	\ov\phi)=G(\ov \U^I,\ov\phi^I)\ ,$$  
$$\ov F(t)=F_{\ov \phi\ov \U^I}(t)\ ,\ 
 F(t)=F_{\phi\U^I}(t)\ ;$$ 
Then we have, arguing as for the previous estimates for  $\phi$,
$$\soT \Vert \phi(t) -\ov \phi(t)\Vert_{D(A_2)} 
\leq T\ov a  \left(2\soT \Vert{ \U^I}'(t)-{\ov\U^I}'(t)\Vert_X +\soT \Vert \U^I(t)-\ov\U^I(t)\Vert_X
\right);$$
using (\ref{uuu})
$$\frac 1 2 
\soT \Vert \phi'(t)-\ov \phi'(t)\Vert_2^2+
\underline D \ioT \Vert {\phi}_{xt}(t)-{\ov\phi}_{xt}(t)\Vert_2^2 dt
 \leq \frac{T\ov a^2} {2\underline b} \soT \Vert { \U^I}'(t)-{\ov\U^I}'(t)\Vert_X^2\ ,$$
Then, using , Lemma \ref{co}, 
$$\begin{array}{ll}\dsp
\soT \Vert \ov {\mathcal U}(t)-\mathcal U(t)\Vert_X
\leq 
  \soT\left\Vert\iot \mathcal T_1(t-s)  (\ov F(s)- F(s) ) \ ds\right\Vert_X
\\ \\ \dsp
\leq T L_{1K} \soT \Vert \ov\U^I(t)- \U^I(t)\Vert_X\\ \\ +T \frac{c_S}{\underline \lambda}
\left((\underline \lambda+1+L_{1K})M +\dsp\soT \Vert Z(t)\Vert_X\right)\dsp \soT\Vert \ov\phi_x(t)-\phi_x(t)\Vert_2\ ,
\end{array}\ $$
where $c_S$ is a Sobolev constant; moreover,  arguing as in (\ref{ut}) and using (\ref{co2})
$$
\sup_{[0,T]}\Vert \ov \U'(t)- \U'(t)\Vert_X\leq  \ioT \Vert F'(t)-\ov F'(t)
\Vert_X  dt
\  \leq \  \sqrt  T L_{2K} 
 \Vert \ov\U^I-\U^I\Vert_{C([0,T];Y)} $$
 $$+ \sqrt T\frac{c_S}{\underline \lambda}
 \left(	(\underline \lambda +1+L_{1K})M +\Vert Z\Vert _{C[0,T];X)}\right)
  \Vert \phi_x -\ov \phi_x\Vert_{H^1((0,T);L^2)}
$$
$$ + T\left(L_{2K} \Vert \ov\U^I-\U^I\Vert _{C^1[0,T]; X)} + Mc_S \Vert \phi -\ov \phi\Vert_{C([0,T];H^2)}\right)\ .
$$
Finally, for $t\leq T$,
$$
\Vert A_1(\U(t))-A_1(\ov \U(t))\Vert_X \leq  \Vert \U'(t)-\ov \U'(t)\Vert_X+ \Vert F(t)-\ov F(t)\Vert_X  
\leq  \Vert \U'(t)-\ov \U'(t)\Vert_X
$$ $$
+L_{1K} T \Vert {\ov\U^I}'- {\U^I}'\Vert_{C([0,T];X)} + \frac{c_S}{\underline \lambda} 
\left((\underline \lambda+1+L_{1K})M+\Vert Z\Vert_{C([0,T];X)}\right)
\Vert \ov\phi_x(t)-\phi_x(t)\Vert_2 .$$
Hence, if  $T$ is sufficiently small, then $G$ is a contraction mapping in $B_{MK}$ and
the unique fixed point  $(\mathcal U,\phi)\in B_{MK}$ is the solution to problem (\ref{ss})-(\ref{TC2}).
The existence and uniqueness of local solutions  for problem (\ref{sysi})-(\ref{compat}) follow from  (\ref{trasl}).
\end{proof}

\end{section}

\begin{section} 
{\bf A priori estimates}

In this section we assume the condition (\ref{nd}) .
Moreover,we assume that there exists a stationary solution
$(U(x),V(x),\Psi(x))$ to problem (\ref{sysi}),( \ref{KC}), (\ref{TC}), 
 verifying the boundary conditions
\be\label{ub}\eta_j \lambda_{i(j)} V(e_j) =W_j \ , \ \eta_jD_{i(j)} \Psi_{i(j)}(e_j) + d_j \Psi_{i(j)} (e_j) =P_j\  , \ j\in\mathcal J\ ;\ee
we notice that, integrating the first equation in (\ref{sysi}) and using  the conservation of the flux (\ref{consv})  at each inner 
node, it turns out to be 
 necessary that 
$\dsp \sum_{j\in \mathcal J}W_j =0 \ $ . 

We set 
\be\dsp\label{um}\mu_s=\suM \iIi U_i(x) dx \ . \ee
Let 
$(\ov u(x,t), \ov v(x,t),\ov \psi(x,t))$
 be the local solution  in Theorem \ref{le} to problem (\ref{sysi})-(\ref{compat}); 
we denote with   $(\ov u_0(x),\ov v_0(x), \ov\psi_0(x)) $ the initial data, with  $\dsp\ov \mu(0):=\suM\iIi \ov u_0(x) dx $ 
the initial mass and with $ \mathcal {\ov W}(t),  \mathcal {\ov P}(t)$ the boundary data, so that, integrating the first equation in (\ref{sysi}), we have the following expression for the mass at the time $t$
 $$\ov \mu(t) =\ov  \mu(0)-\sum_{j\in\mathcal J}  \int_0^t \mathcal {\ov W}_j(s)ds\ .$$
  Due to the assumption of existence of the  stationary solution $(U(x),V(x),\Psi(x))$,
 the triple $$(u(x,t),v(x,t),\psi(x,t))
=(\ov u(x,t)-U(x),\ov v(x,t) -V(x), \ov \psi(x,t)-\Psi(x))$$ is the local  solution of the following problem
\begin{equation}\label{sysas}
    \left\{ \begin{array}{l}
\partial_t \u +\lambda_i \partial_x \v=0\ ,
\\ \\
\partial_t \v +\lambda_i \partial_x \u= \u \partial_x \psi_i -\beta_i \v + u_i \partial_x \Psi_i  +U_i \partial_x \psi_i \ ,
\ \      t\in[0,T]  , \ x\in I_i  , \  i\in \M,
\\ \\
\partial_t \psii= D_i \partial_{xx} \psi_i +a_i\u -b_i\psi_i\\ \\
u(x,0)=u_0(x):=\ov u_0(x)-U(x), \ v(x,0)=v_0(x):=\ov v_0(x)-V(x),\\ \\ 
 \psi(x,0)=\psi_0(x):=\ov \psi_0(x)-\Psi(x) \ ,\\ \\
\eta_j\lambda_{i(j)} v_{i(j)}(e_j, t)=\mathcal W_j(t):=\mathcal{\ov W}_j (t)-W_j \quad\qquad\qquad \qquad\qquad j\in\mathcal J\ , \\ \\ 
\eta_j D_{i(j)} {\psi_{i(j)}}_x(e_j, t) + d_j \psi_{i(j)}
=\mathcal  P_j(t) :=\mathcal{\ov P}_j(t)-P_j\ ,\qquad\qquad j\in\mathcal J\ ,
\end{array}\right.\end{equation}
complemented with the transmission conditions (\ref{TC}) and (\ref{KC}). We set 
\be\label{uum} \mu(t):=\suM\iIi u_i(x,t) dx =\ov \mu(t)- \mu_s\ .\ee
 We are going to prove  some a priori estimates for the solution to the above problem, assuming suitable conditions on the data.
If the stationary solution and the data in (\ref{sysas}) are small in some suitable norms, then these estimates provide a global existence result for problem (\ref{sysas}). In this way,  after the proof of the real existence of stationary solutions in the next section,  the results in the following propositions will be the tools to prove the existence of global solution to problem (\ref{sysi})-(\ref{compat}).

We assume the following relations among the initial and boundary data of $(\ov u, \ov v,\ov \psi)$ and the stationary solution:
\be\label{WW} \mathcal{\ov W}_j-W_j\in    W^{2,1}((0,+\infty)) \ ,\ \t{ for all } j\in \mathcal J\ ,\ee
\be\label{PP}\mathcal {\ov P}_j\in H^2((0,T))\ \t{ for all } T>0\  , 
\mathcal {\ov P}_j-P_j \in  H^{1}((0,+\infty)) \ ,\   \t{ for all }  j\in \mathcal J\ ,\ \ee
\be\label{mut}\ov  \mu(t) -\mu_s \in L^2((0,+\infty))\ .\ee
First we remark that the assumption (\ref{nd}) implies that 
the condition (\ref{TC})
can be rewritten as follows
\be\label{ccnd}
\dsp u_j(N_\nu,t)=u_{k_\nu} (N_\nu ,t)+\sum_{i\in\M^\nu} \gamma^\nu_{ij} \v(N_\nu,t)\qquad \t{ for all } j\in\M^\nu ,
 \nu\in\mathcal N\ ,\ee
for suitable $\gamma^\nu_{ij}$ (see the proof of Lemma 5.9 in \cite{noi}). This equality allow to prove 
 the following  estimate for $ \Vert \u(\cdot,t)\Vert_\infty $. 

Set $|\A|:=\dsp\suM L_i$.

\bp\label{uinf} Let (\ref{nd}) hold;
let $u,v\in C([0,T];L^2(\A))\cap C^1([0,T];H^1(\A))$ satisfying the conditions (\ref{TC});
then, for 
 all $i\in\M$, $0\leq t\leq T$,
$$ \Vert u_i( \cdot,t)\Vert_\infty \leq \frac{|\mu(t)|} {|\A|}+2 \suMj\left( 2\Vert u_{jx}(\cdot,t)  \Vert_1+3 \gamma
 \Vert v_j(\cdot,t)\Vert_\infty \right)\ ,$$
where $\mu(t)=\dsp \suM u_i(x,t) dx$ and $\gamma =\max\{ \vert \gamma_{ij}^\nu\vert\}$ .

\epr

\begin{proof}

We consider two consecutive nodes, $N_\nu$ and $N_h$,  and let $I_l$ be the arc linking them.
For all $x\in I_l$, $t\in[0,T]$
$$u_l(x,t)= u_l(N_\nu,t)+\int_{N_\nu}^x u_{l_y} (y,t) dy= u_l(N_h,t)+\int_{N_h}^x u_{l_y}(y,t)dy\ $$
(by $N_\nu$ we mean $0$ if $N_\nu$ is the starting  node of $I_l$ and we mean $L_l$ otherwise);
 let  $k_\nu,k_h$ the indexes relative to the nodes, $N_\nu$ and $N_h$ in condition (\ref{nd}), then, using (\ref{ccnd}), we can write
  for all $t\in[0,T]$,
$$\begin{array}{ll} \dsp
u_{k_\nu} ( N_{\nu},t) +\dsp \sum_{j\in\M^\nu} \gamma^{\nu}_{l j} v_j(N_{\nu},t) + \int_{N_\nu}^x u_{l_y} (y,t)dy\\ \\
\qquad\qquad =u_{k_h} ( N_{h},t) \dsp
+ \sum_{j\in\M^{h} }\gamma^{h}_{lj} v_j(N_{h},t)+ \int_{N_h}^x u_{l_y}(y,t) dy\ ;
\end{array}$$
then
$$
u_{k_\nu} ( N_{\nu},t) =u_{k_h} ( N_{h},t) -  \sum_{j\in\M^{\nu}} \gamma^{\nu}_{l j} v_j(N_{\nu},t)  + \sum_{j\in\M^{h} }\gamma^{h}_{lj} v_j(N_{h},t)+ \int_{N_h}^{N_\nu} u_{l_y}(y,t) dy .
$$
Since each node of the network is connected with the node  $N_1$, the above relation implies that, for all $p\in\mathcal N$, we can express the value of $u_{k_p}(N_p,t)$ in the following way
$$u_{k_p}(N_p,t)= u_{k_1}(N_1,t) +  \Gamma_p (t)\ ,$$
where  $k_p$ and $k_1$ are the indexes in condition (\ref{nd}) relative to $N_p$ and $N_1$ respectively, and
\begin{equation}\label{Gamma}\dsp\vert \Gamma_p(t)\vert \leq  \suMj \left( 2 \gamma \Vert v_j(t)\Vert_\infty+ \Vert u_{j_x}(t)\Vert_1\right) \ .\ee
For all $i\in \M_1$, thanks to condition (\ref{nd})), we have, for all $x\in \Ii$, $t\in[0,T]$,
$$ \u(x,t)= u_{k_1}(N_1,t) + \sum_{j\in\M_1} \gamma^1_{ij} v_j(N_1,t) +\int_{N_1}^x \ux (y,t) dy,
$$
and, thanks to the previous computations, a similar expression can be derived for all $i\in \M_p$, for all $p\in\mathcal N$:
$$\u(x,t)= u_{k_1}(N_1,t) + \Gamma_p(t)+ \sum_{j\in\M_p} \gamma^p_{ij} v_j(N_p,t) +\int_{N_p}^x \ux (y,t) dy\ \ \t{ for all } x\in\Ii.
$$
Obviolusly, each $u_i$ has not a unique expression; in all cases, for all $i\in\M$ we can write
\begin{equation} \label{svolta3}
\u(x,t)= u_{k_1}(N_1,t) +\ov \Gamma_{i}(t)+ \int_{N_q}^x \ux (y,t) dy\ \ \t{ for all } x\in\Ii.
\ee
where $N_q$ is one of the extreme points of $I_i$ and  $\ov \Gamma_{i}(t)$ is a suitable  quantity verifying
\be\label{svolta4}\vert \ov \Gamma_i(t) \vert \leq \suMj \left( \Vert u_{jx}(t)  \Vert_1+ 3 \gamma \Vert v_j(t)\Vert_\infty \right) \ .\ee
Integrating on $\Ii$ the  equality (\ref{svolta3}), we obtain
$$\dsp L_i u_{k_1}(N_1,t) =  \iIi  \u(x,t)dx- L_i \ov \Gamma_{i}(t)
-\dsp \iIi\int_{N_q}^x \ux (y,t) dy\ dtx\  ,$$
whence, summing for $i\in \M$ and using (\ref{svolta4}), we infer 
that, for all $t$,
$$\dsp |u_{k_1}(N_1,t)|\leq    \frac {|\mu(t)|} {|\A|}+ \suM \left(3\gamma  \Vert\v(t)\Vert_\infty +2\Vert \ux(t)\Vert_1 \right)\ .
$$
Now we use this inequality in (\ref{svolta3}) to obtain the claim.

\end{proof}

Now we are going to obtain  a priori estimates necessary to prove the uniform (in time) boundedness of some norms  of $(u,v,\psi)$ when the data are small. Similar results are proved in \cite{noi} in the case of homogeneous boundary conditions, when  $\frac {a_i}{b_i}$  does not change with $i$, and some of the proofs have minor  differences from the ones in that paper .
%STIMA1
\bp\label{I}
Let (\ref{nd}) hold and 
let $(u,v,\psi)$ be the local solution to (\ref{sysas})-(\ref{mut}),(\ref{KC})-
(\ref{compat});
then
$$\begin{array}{ll}
\displaystyle
\suM\left(\soT \Vert\u(t)\nld^2+\soT\Vert\v(t)\nld^2 + 2\beti \ioT \Vert\v(t)\nld^2 dt \right) \\ \\ \leq
\displaystyle
2 c_S\sum_{j\in\mathcal J}\soT \Vert u_{i(j)}(t)\nhu \Vert \vert\mathcal W_j\Vert_{L^1(0,+\infty)}
+
\suM\left(\Vert\uo\nld^2+\Vert\vo\nld^2\right) \\ \\
+\dsp
 c_S\dsp\suM
\soT\Vert\u(t)\nhu \ioT\left(\Vert\psix(t)\nld^2 +\Vert\v(t)\nld^2
\right)dt  \\ \\
\dsp+ \suM \Vert U_i\Vert_{\infty} \ioT\left(\Vert{ \psii}_x(t)\Vert_2^2+\Vert \v(t)\Vert_2^2\right) dt \\ \\
\dsp+c_1\left( \suM \Vert {\Psi_i}_x\Vert_{\infty} \right)
 \left(\int_0^{+\infty}  |\mu(t)|^2  dt +  \ioT (\Vert u_x(t)\Vert_2^2+ \Vert v(t)\nhu^2) dt \right)
\end{array}$$
where $c_S$ are Sobolev constants and $c_1$ is a suitable constant. depending on  Sobolev's constants, on $ L_i$  and on the quantity $\gamma$ in Proposition \ref{uinf} .
\epr

\begin{proof}

We multiply the first equation  in (\ref{sysas}) by $\u$, the second one by $\v$ and we sum them; after summing up for $i\in\M$, we obtain the claim,
taking into account that from  Proposition \ref{uinf} we have
$$ \ioT \iIi u_i^2(x,t)dx dt
 \leq c \ioT\left(|\mu(t)|^2 +  \Vert u_x(t)\Vert_2^2+ \Vert v(t)\nhu^2\right) dt$$
where $c$ is a suitable constant depending on $L_i$, $\gamma$ and Sobolev constants, 
and that 
the transmission conditions (\ref{TC}) imply condition (\ref{444}) holding for $u$ and $v$, hence the sum of the terms at nodes 
is non positive .\end{proof}

%STIMA2

\bp\label{II}
Let 
$(u,v,\psi)$ be the local solution to (\ref{sysas})-(\ref{mut}),(\ref{KC})-(\ref{compat});
 then
$$\begin{array}{ll}\displaystyle
\suM\left(\soT \Vert \vx(t)\nld^2+\soT\Vert\vt(t)\nld^2 +2 \beta_i \int_0^T\Vert\vt(t)\Vert_2^2 dt \right)\,
\\ \\ \displaystyle \leq \suM \left(\Vert\vox\nld^2+\Vert\vt(0)\nld^2\right)+\dsp + 2
c_S \suMjj  \soT\Vert u_{i(j)}(t)\nhu \Vert  |\mathcal W''_j\Vert_{L^1(0,+\infty)}
\\ \\ \dsp  +2c_S  \suMj   \left(\Vert \mathcal W_j'\Vert_{L^{\infty}(0,+\infty)} \soT \Vert u_{i(j)}(t)\nhu +| \mathcal W_j'(0)| \Vert u_{i(j)}(0)\nhu \right) 
\\ \\ \displaystyle +  \suM
\left(c_S\soT\Vert \u(t)\nhu  +\Vert U_i\Vert_\infty\right)
\ioT\left(\Vert\psi_{ixt}(t)\Vert_2^2+\Vert\vt(t)\Vert_2^2\right)\, dt
\\ \\\dsp + \left (c_S \soT\Vert \psi_{x}(t)\nhu+\suM\Vert \Psix\Vert_\infty\ \right)
\ioT\left(\Vert\vt(t)\Vert_2^2+\Vert{\v}_x(t)\Vert_2^2\right)\, dt
\end{array}$$
where $c_S$ are Sobolev constants.
\epr
\begin{proof}
We set
$\del f(x,t)=f(x,t+h)-f(x,t)$; we have, for $i\in \M$,
$$
\left\{ \begin{array}{l}
\left(\del \ut +\lambda_i\del \vx\right)\del\u=0\ ,
\\ \\ \left(\del\vt+\lambda_i \del \ux\right)\del \v= \left(\del(\u  \phx )+U_i \del \psi_{ix} + \Psi_{ix} \del u_i -\beta_i \del\v \right)\del\v\ .
\end{array}\right.  
$$
  Summing  the above two equations and  integrating over
$\Ii\times(\delta,\tau)$, for  $0<\delta<\tau< T$,  $\vert h\vert \le \min\{\delta, T-\tau\}$ we obtain
\be\begin{array}{ll}\label{1}\displaystyle
\int_\delta^{\tau} \iIi
\pa_{t}\left(\frac {(\del\u)^2 +(\del \v)^2}2\right) dx \,dt+ 
\int_\delta^{\tau} \iIi\lai\pa_x\left( \del\v\del\u\right)\ dx\, dt \\ \\ \displaystyle
= \int_\delta^{\tau} \iIi\left( (\del(\u{\psi_i}_x)+\Psi_{i_x}\del \u + U_i\del\psi_x)\del\v -\beta_i(\del\v)^2\right)\ dx \,dt\ .
\end{array}
\ee

Using  condition (\ref{444})  and the boundary conditions we can compute
$$\dsp-\suM
\int_\delta^{\tau} \iIi\lai\pa_x\left( \del\v(x,t)\del\u(x,t)\right)dxdt \leq 
-\dsp \suMjj \int_\delta^\tau \del u_{i(j)}(e_j, t) \del \mathcal W_j(t) dt 
\ $$
$$=- h\suMjj \int_0^1 (\del\mathcal W_j(\tau) u_{i(j)}(e_j, \tau +\theta h) - \del\mathcal W_j(\delta) u_{i(j)}(e_j, \delta +\theta h)) d\theta $$
$$+ h\suMjj \int_0^1 \int_\delta^\tau  \del \mathcal W_j'(t) u_{i(j)}(e_j,t+\theta h) dt d\theta$$
so that, after  dividing  
  the equalities (\ref{1}) by $h^2$, summming  them for $i\in\M$ and  letting first $h$ and then  $\delta$ go to zero, we obtain the claim.
\end{proof}

%STIMA3

\bp\label{III}
Let 
$(u,v,\psi)$ be the local solution to (\ref{sysas})-(\ref{mut}),(\ref{KC})-(\ref{compat});
 then
$$\begin{array}{ll}\displaystyle
\suM\soT\lambda_i\Vert \ux(t)\nld^2\leq   \suM \frac 2 {\lambda_i}\left(\soT\Vert\vt(t)\nld^2+\beta_i^2\soT\Vert\v(t)\nld^2\right)\\ \\
\displaystyle
+ \suM(c_S\soT\Vert \u(t)\nhu + \Vert U\Vert_\infty) \left(\soT\Vert\ux(t)\nld^2 +\soT\Vert \phx(t)\nld^2\right)\\ \\
+\dsp  \dsp \suM\Vert \Psix\Vert_\infty  \soT\Vert\u(t)\nhu^2 
\end{array}$$
where $c_S$ depends on Sobolev constants .
\epr
\begin{proof}
We multiply the second equation in (\ref{sysas}) by $\ux$, we integrate over $\Ii$ and we sum for
$i\in\M$; using the Cauchy-Schwartz inequality  we obtain the claim.
 \end{proof}

%STIMA4

\bp \label{IV}
Let 
(\ref{nd}) hold and 
let $(u,v,\psi)$ be the local solution to (\ref{sysas})-(\ref{mut}),(\ref{KC})-(\ref{compat});
 then
$$\begin{array}{ll}\displaystyle
\suM\lambda_i\ioT\Vert \ux(t)\nld^2\, dt\leq \suM  \frac 2 {\lambda_i}\ioT\left(\Vert\vt(t)\nld^2+\beta_i^2\Vert\v(t)\nld^2\right)
\,dt
\\ \\
\displaystyle
+ \suM(c_S \soT\Vert \u(t)\nhu+ \Vert U\Vert_\infty)
\ioT\left(\Vert\ux(t)\nld^2 +\Vert \phx(t)\nld^2\right)\,dt \\ \\
\dsp +c_2  \suM \Vert \Psix\Vert_\infty\left ( \int_0^{+\infty}  |\mu(t)|^2 dt +\ioT\left(\Vert \ux(t)\nld^2 +\Vert \v(t)\nhu^2\right) dt \right)
\end{array}$$
where  $c_S$ depends on Sobolev constants  and $c_2$ is a 
constant depending on Sobolev constants, on  $L_i$  and on $\gamma$ .
\epr
\begin{proof}
We multiply the second equation (\ref{sysas}) by $\ux$, we integrate over $\Ii\times(0,T)$ and we sum for
$i\in\M$; using the Cauchy-Schwartz inequality and Proposition \ref{uinf}, we obtain the claim.
 \end{proof}

%STIMA5

\bp\label{V}
Let 
 $(u,v,\psi)$ be the local solution to (\ref{sysas})-(\ref{mut}),(\ref{KC})-(\ref{compat});
then
$$\begin{array}{ll}\displaystyle
\ \suM\lambda_i^2\ioT\Vert \vx(t)\nld^2 \,dt\leq 
c_3 \suM\left(\Vert \vo\nld^2+ \Vert \uo\nhu^2\left(1+\Vert \pho\nhu^2\right)
\right)\\ \\  \dsp
+ \suMjj 	c_S\left( \Vert u_{i(j)}(0)\nhu  \vert\mathcal W_j(0)\vert 
\ + \Vert\mathcal W_j\Vert_{L^\infty(0,+\infty)} \soT\Vert u_{i(j)}(t)\nhu\right)\\ \\
+
\dsp  c_S \suMjj \soT\Vert u_{i(j)}(t)\nhu \Vert \mathcal W_j'\Vert_{L^1(0,+\infty)} 
 \dsp  
+c_4 \suM\left(\soT\Vert \vt(t)\Vert_2^2+
\ioT\Vert v_{it}(t)\nld^2 dt \right)
 \\ \\
\displaystyle
\displaystyle
+\frac 1 2 \suM 
\left( c_S \soT\Vert \u(t)\nhu+\Vert U\Vert_\infty\right) \ioT\left(\Vert \v(t)\Vert_2^2 + \Vert \psi_{i_{xt}}(t)
\nld^2\right)\,dt\\ \\
\dsp 
+\frac {1} 2 c_S \suM  \left( \soT\Vert\phx(t)\nhu +
\Vert \Psi_x\Vert_\infty\right)
\ioT\Vert \v(t)\nhu^2 \,dt
\end{array}$$
where $c_S$ are Sobolev constants, and $c_3,c_4,c_5$ are positive constants depending on $\lambda_i, \beta_i, \sigma_{ij}$, 
and on Sobolev constants.
\epr
\begin{proof}
Using the same notations as in the proof of Proposition \ref{II}, by the second equation in (\ref{sysas})
we obtain, for $0<\delta<\tau<T$, $\vert h\vert \le \min\{\delta, T-\tau\}$,
$$\begin{array}{ll}\displaystyle
\int_\delta^\tau \iIi\left((\v\del \v)_t-\vt\del \v -\lai \vx\del\u\
+\lai\left(\v\del\u\right)_x \right) dx\ dt\\ \\
=
\dsp\int_\delta^\tau\iIi \v\left(\del(\u\phx)+\del u \Psix +U_i\del\phx-\beta_i \del\v\right)\ dx \ dt\ .\end{array}$$
Using the boundary conditions in (\ref{sysas})  and (\ref{TC}) 
 we can write
\be\label{2b}\begin{array}{ll}\displaystyle
\suM \frac 1 h \int_\delta^\tau \iIi 
\left(-\lai \vx\del\u
+
\beta_i \v\del\v\right)\ dx\ dt  \\ \\ \dsp
=
\frac 1 h\suM
\iIi\left( -\v(\tau)\del \v(\tau)\ dx  +\v(\delta)\del \v(\delta)\right) dx 
\dsp - \frac 1 h \int_\delta^\tau \suMjj \mathcal W_j \del u_{i(j)}(e_j)  dt 
 \\ \\ \dsp
+\frac 1 h\suM\int_\delta^\tau \iIi\left(\vt\del \v 
+ \v\left(\del(\u\phx)+\Psix\del u_i+U\del \phx )\right)\right) dx \ dt\\ \\ \dsp 
-\frac 1 h \int_\delta^\tau
\sum_{\nu\in\mathcal N}\sum_{i,j\in\M^\nu} \frac {\sigma_{ij}} 2\left(u_j(N_\nu)-\u(N_\nu)\right)
\del \left(u_j(N_\nu)-\u(N_\nu)\right)dt .
\end{array}\ee
In order to treat the  terms  at the inner nodes, as in \cite{noi},  we set 
$H(t)=u_j(N,t)-\u(N,t)$,  and we have
$$\begin{array}{ll}\dsp
\lim_{h\to 0}\frac 1 h \int_\delta^\tau H(t)\del H(t)\ dt
=\frac 1 2 \left(H^2(\tau)-H^2(\delta)\right) \ .\end{array}$$
As regard to the terms at the boundary nodes, we argue as in the proof of Proposition \ref{II}.
Then
we obtain the claim letting $h$ and then $\delta$ go to zero and $\tau$ go to $T$ in (\ref{2b}).
\end{proof}

\medskip

%STIMeper $\phi$

\bp\label{VII}
Let 
$(u,v,\psi)$ be the local solution to (\ref{sysas})-(\ref{mut}),(\ref{KC})-(\ref{compat});
 then
$$\begin{array}{ll}\dsp
\suM\soT\left(\Vert {\psi_i}_t (t)\nld^2 +\ioT\left(b_i\Vert {\psi_i}_t(t)\nld^2+2 D_i\Vert {\psi_i}_{tx}(t)\nld^2\right) \ dt\right) \\ \\ \leq 
\dsp
c_6\suM \left(\Vert \psi_{0i}\nhd^2+
\Vert \uo\nld^2\right)
+\dsp
\suM \frac {a_i^2}{b_i}\ioT \Vert \ut(t)\nld^2 \ dt
 \\ \\
+\dsp c_S\sum_{j\in\mathcal J}  \Vert \mathcal P'_j\Vert_{L^2(0,+\infty)}\left(
 \left( \int_0^T \Vert{ \psi_{i(j)}}_t(t)\Vert_2^2 dt\right)^{\frac 1 2}  + \left( \int_0^T \Vert { \psi_ {i(j)}}_{tx}(t)\Vert_2^2 dt\right)^{\frac 1 2}\right)
\end{array}$$
where  $c_6$ depends on  $D_i,b_i, a_i$ and $c_S$ depends on    Sobolev constants. 
\epr
\begin{proof}
From the third equation in (\ref{sysas}), using  (\ref{coer}), and the boundary conditions in (\ref{sysas}), we obtain, for $0<	\delta<\tau<T$
$$\suM\left( \iIi  {(\del \psii(\tau))^2} dx +2\int_\delta^\tau\iIi\left( b_i(\del \psii)^2+D_i (\del {\psii}_x)^2 \right) dx dt\right)$$ $$ \leq
\suM\left(\iIi {(\del {\psii}(\delta))^2} dx +2a_i\int_\delta^\tau\iIi \del u_i \del \psi_i dx dt \right) + 2
\sum_{j\in\mathcal J}\int_\delta^\tau \del \mathcal P_j \del\psi_{i(j)}(e_j) dt  .$$
Since
$$   \begin{array}{ll} \dsp\sum_{j\in\mathcal J}\int_\delta^\tau \del \mathcal P_j(t) \del\psi_{i(j)}(e_j,t) dt \\ \\ \leq
\dsp\sum_{j\in\mathcal J}  \sqrt{2} c_S \Vert \Delta^h \mathcal{ P}_j\Vert_{L^2(0,+\infty)} \left( \int_\delta^\tau\left( \Vert \Delta^h\psi_{i(j)}(t)\Vert_2^2+
\Vert {\Delta^h\psi_{i(j)}}_x(t)\Vert_2^2\right) dt\right)^{\frac 1 2},\end{array}$$
as in the previous proof we can conclude dividing by $h^2$, letting $h$ go to zero and then $\delta$ go to zero and $\tau$ go to $T$.
\end{proof}

\bp\label{VIII}
Let 
$(u,v,\psi)$ be the local solution to (\ref{sysas})-(\ref{mut}),(\ref{KC})-(\ref{compat}); 
 then
 $$\begin{array}{ll}\dsp
\suM\left( \frac{D_i^2}{b_i}\soT \Vert {\psi_i}_{xx}(t)\nld^2 + 2D_i \soT \Vert {\psi_i}_x(t)\nld^2\right)  \\ \\ \leq 
\displaystyle \suM \ \left( \frac  2 {b_i} \soT\Vert {\psi_i}_t(t)\nld^2 +\frac{2 a_i^2}{b_i}\soT \Vert \u(t)\nld^2\right)  \\ \\
\dsp+
2c_S\suMjj \soT\Vert \psi_{i(j)}(t)\nhu \Vert \mathcal P_j \Vert_{L^\infty(0,+\infty)} ;
\end{array}\ $$
moreover, if (\ref{nd}) holds,
 $$\begin{array}{ll}\dsp
\dsp
\suM\int_0^T\left( \frac{D_i^2 }{b_i} \Vert {\psi_i}_{xx}(t)\nld^2 + 2D_i  \Vert {\psi_i}_x(t)\nld^2\right) dt
 \leq    \suM \ioT \frac  2 {b_i} \soT\Vert {\psi_i}_t(t)\nld^2  dt\\ \\
\displaystyle +c_7\suM  \ioT\left(\mu(t)^2+\Vert \ux(t)\nld^2 +\Vert \v(t)\nhu^2\right) dt 
+c_8 \suM\dsp \Vert \mathcal P_j\Vert_{L^2(0,+\infty)} 
\Vert \mu(t) \Vert_{L^2(0,+\infty)}\\ \\
+c_9\dsp \suMj\dsp \Vert \mathcal P_j\Vert_{L^2(0,+\infty)} 
\left( \int_0^{T}\left( 
\Vert {\psi}_x(t)\Vert_{H^1}^2+
\Vert {\psi}_t(t)\Vert_{2}^2 +\Vert {u}_x(t)\Vert_{2}^2 + \Vert {v}(t)\Vert_{H^1}^2\right)  dt\right)^{\frac 1 2 }
\end{array}\ $$
where $c_7,c_8,c_9$ are positive constants depending on $\gamma, L_i, a_i,b_i,D_i$ and Sobolev constants.
\epr
\begin{proof}

The first inequality can be achievd multiplying the third equation in (\ref{sysas}) by $\frac{D_i}{b_i}\psi_{ixx}$, 
integrating on $I_i$, summing for $i\in\M$ and using the Cauchy-Schwartz inequality and (\ref{coer}).
 Integrating over $I_i\times (0,T) $
and using  Proposition  \ref{uinf} 
we obtain the second inequality .
\end{proof}

Now we introduce the functional
$$\begin{array}{ll}
\dsp F_T^2(u,v,\psi) := \sup_{t\in[0,T]}  \Vert u(t)\Vert_{H^1}^2+
\sup_{t\in[0,T]}\Vert v(t)\Vert_{H^1}^2+
\sup_{t\in[0,T]}\Vert \psi(t)\Vert_{H^2}^2 \\ \\
\dsp +\int_0^T 
\left( \Vert u(t)\Vert_{H^1}^2+
\Vert v(t)\Vert_{H^1}^2+\Vert v_t(t)\Vert_{2}^2+\Vert \psi(t)\Vert_{H^2}^2 + \Vert\psi_t(t)\Vert_2^2
+ \Vert \psi_{xt}(t)\Vert_{2}^2\right)\ dt \ .\end{array}$$

The a priori estimates in the previous propositions allows to prove the following theorem.

\bt \label{4.1}
Let  (\ref{nd}) hold. Let $(U(x),V(x),\Psi(x))$ be a stationary solution to problem 
(\ref{sysi}), (\ref{KC}), (\ref{TC}), (\ref{ub}), (\ref{um})
and let   $(u,v,\psi)$ be the  solution to problem (\ref{sysas})-(\ref{mut}), (\ref{compat}).
There exists $\ep_0>0$ such that, if 
$$\Vert U\Vert_\infty+ \Vert \Psi_x\Vert_{\infty}\leq\ep_0\ ,$$
then,  if 
the quantities
$$\Vert \mathcal P\Vert_{H^{1}(0,+\infty)}, \Vert \mathcal W\Vert_{W^{2,1}(0,+\infty)}, \Vert \mu\Vert_{L^{2}(0,+\infty)},
\Vert u_0\Vert_{H^1}, \Vert v_0\Vert_{H^1}, \Vert \psi_0\Vert_{H^2}$$
are suitably small,  
then $F_T(u,v,\phi)$ is bounded, uniformly in $T$ ,
$$u,v\in C([0,+\infty);H^1(\A))\cap C^1([0,+\infty); L^2(\A))$$
$$\psi\in C([0,+\infty);H^2(\A))\cap C^1([0,+\infty); L^2(\A))
\cap H^1((0,+\infty); H^1(\A))$$
and, for all $i\in\M$,
$$
 \lim_{t\to +\infty} \suM\Vert u_i(\cdot, t)\Vert_{C(\ov I_i)} ,
\displaystyle \lim_{t\to +\infty} \suM\Vert v_i(\cdot, t)\Vert_{C(\ov I_i)} \ ,
\displaystyle \lim_{t\to +\infty}\suM \Vert \psi_i(\cdot, t)\Vert_{C^1(\ov I_i)}= 0\ .
$$
\et

\begin{proof}

Using the estimates proved in Propositions \ref{uinf}-\ref{VIII}, 
it is easy to prove 
the following inequality
\be \label{cubic}
F_T^2(u,v,\phi) \leq \ov C_0 
+\ov C_2 F^2_T(u,v,\psi) + \ov C_3 F_T^3(u,v,\psi)  ,\ee
where
$$\begin{array}{ll} 
\ov C_0
= c_0 \Big(  \Vert u_0\nhu^2 \left( 1+\Vert \psi_0
\nhd^2 \right)+\Vert \psi_0\nhd^2 +
\Vert v_0\nhu^2
+ \left(\Vert \Psi_x\Vert_{\infty}+1\right) \Vert \mu\Vert_{L^2(0,+\infty)}^2 \\ \\+
\Vert \mathcal P\Vert_{L^2(0,+\infty)}\Vert \mu\Vert_{L^2(0,+\infty)}
+\dsp \frac 1 {2\delta} \left( \Vert \mathcal W\Vert^2_{W^{2,1}(0,+\infty)}+ 
\Vert \mathcal P\Vert^2_{H^1(0,+\infty)}\right)\Big), \\ \\
\ov C_2=c_2\left( \Vert U\Vert_\infty+\Vert \Psi_x\Vert_\infty  +\dsp \frac \delta 2\right)\ , \\ \\
\ov C_3= \ov C_3(c_S, \beta_i,\lambda_i,b_i,a_i,D_i,L_i,\gamma, \sigma_{ij})\  > 0,
\end{array}$$
 $c_i> 0$ depend on $c_S, \beta_i,\lambda_i,b_i,a_i,D_i,L_i,\gamma$ and $\delta$ is any positive quantity.

If we choose $\Vert U\Vert_\infty, \Vert \Psi_x\Vert_\infty, \delta $  in such a way that $\ov C_2<1$, and  we choose $\ov C_0$  small enough, if in addition $F_0(u,v,\psi)< \dsp \frac {2(1- \ov C_2)} {3 \ov C_3}$, then the inequality (\ref{cubic}) implies  that $F_T(u,v,\psi)$ remains  uniformly bounded for  all $T>0$; then the solution  is globally defined.
Moreover 
the set $\{u(t),v(t),\psi(t)\}_{t\in[0,+\infty)}$ is uniformly bounded in $(H^1(\A))^2\times H^2(\A)$; thus,  if we call $E_s$ the set of accumulation points of $\{u(t),v(t),\psi(t)\}_{t\geq s}$ in $(C(\A))^2\times C^1(\A)$, then $E_s$ is not empty and  $\displaystyle E:=\cap_{s\geq 0} E_s \neq \emptyset$.
Let $\hv(x)$  be such that, for a sequence $t_n\to +\infty$,
$$\displaystyle \lim_{n\to +\infty} \suM\Vert v_i(\cdot, t_n)- \hv_i(\cdot)\Vert_{C(\ov I_i)}  =0\ .$$
If we set 
$\omega_i(t):=\Vert  v_i(t,\cdot)\Vert_{L^2(I_i)}$ then the estimates obtained for the functions $v_i$ imply that 
$\omega_i\in H^1((0,+\infty))$  and, as a consequence, $\displaystyle  \lim_{t\to+\infty} \omega_i(t) =0$.
As 
$\displaystyle \lim_{n\to +\infty}\Vert v_i(\cdot,t_n)\Vert_2=\Vert \hv _i(\cdot)\Vert_2$, we obtain $\Vert \hv\Vert_2 = 0$.
The same argument can be applied to the functions $u_{i}$ and $\psi_i$.
\end{proof}

\end{section}

\begin{section} 
{\bf Stationary solutions on acyclic networks} 

In this section we study the real existence of stationary solutions to problem 
 (\ref{sysi})-(\ref{nd}).
 Concening the uniqueness, we can notice that the results of the previous section imply that two stationary solutions with the same mass and the same boundary data , which are small in 
 $H^1\times H^1\times H^2$ norm, have to coincide.

In this section 
we restrict our attenction to acyclic graphs and we approach the study of existence of stationary 
solutions $(U(x),V(x), \Psi(x))$  with mass
\be\label{mu0}\mu_s=\suM \iIi U_i(x)\ dx ,\ee
and  boundary data
\be\label{bi} \eta_j \lambda_{i(j)} V_{i(j)}(e_j)=W_j\ ,\qquad\eta_j\partial_x \Psi_{i(j)}(e_j)+d_j \Psi_{i(j)}(e_j)=P_j\ ,\quad j\in\mathcal J\   ,
\ee
assuming conditions  (\ref{nd}) and some suitable smallness conditions  on $|\mu_s|$, $|W_j|$ and $|P_j|$ .

Of course, for all $i\in\M$,  $V_i(x)$ is a constant function, $V_i(x)=V_i$; moreover, we recall that a set
 of  boundary data $\{W_j\}_{j\in\mathcal J}$ is compatible with the transmission conditions only  if 
 $\dsp\sum_{j\in\mathcal J}  W_j  =0$ (see previous section).
These  facts holds true for general networks. 

In  the case of acyclic network, a set of admissible boundary values  $\{W_j\}_{j\in\mathcal J}$ determines univokely  the costant  value of each function $V_i$  on the internal arc $\Ii$. 
Actually, let consider
an internal  arc $I_{\iota}$  and its starting  node $N_\eta$ and  the sets
\be\label{sets}\begin{array}{ll} \mathcal Q=\{ \nu\in \mathcal N: N_\nu \textrm{ is linked to } N_\eta  \textrm{ by a path not covering }  I_\iota\} \ ,
\\
 \mathcal J' =\{ j\in \mathcal J: e_j \textrm{ is linked to } N_\eta  \textrm{ by a path not covering }  I_\iota\} \ ;
\end{array}\ee
at each  inner node the conservation of the flux (\ref{consv})  holds, then
$$\displaystyle \sum_{\nu\in\mathcal Q\cup \{\eta\}} \left( \sum_{i\in I^\nu} \li V_i(N_\nu) -\sum_{i\in O^\nu} \li V_i(N_\nu)\right) =0\ .$$
Since  $V_i(x)$ is constant on  $I_i$  for all $i\in\M$, using the first condition in (\ref{bi}),
 the above equality reduces to
\be\label{Vl} \lambda_\iota V_\iota(x)=- \dsp\sum_{j\in\mathcal J'} W_j\ .\ee

Hence, a stationary solution to problem (\ref{sysi})-(\ref{TC}) satisfying (\ref{mu0}) is a triple $(U(x),V(x),\Psi(x))$ where $V$ is determined by the boundary conditions and the functions $U$ and $\Psi$ solve the following problem.

\medskip

{\it 
Find $C_i$, $i=1,...m$, and $\Psi\in H^2(\mathcal A)$ such that}
\be\label{pr-ell22}\left\{\begin{array}{ll}\displaystyle
-D_i \partial_{xx} \Psii(x)+b_i \Psii(x) = a_i  U_i(x) \qquad\qquad \qquad x\in I_i \ ,\quad i\in\M \ ,\\ \\
U_i(x)=\exp\left(\frac {\Psii(x)} \li\right) \left( C_i -\dsp\frac{\beta_i}{\li} V_i\dsp\int_0^x\exp\left(-\frac {\Psii(s)} \li\right) \ ds\right)\ ,
\\ \\
\eta_j D_{i(j)}\partial_x{\Psi_{i(j)}}(e_j)+d_j \Psi_{i(j)}(e_j) =P_j\ , \qquad  j\in\mathcal J,  \\ \\
\delta_\nu^i\dsp D_i\partial_x\Psii(N_\nu)=\suMjn \alpha^\nu_{ij} (\Psi_j(N_\nu)-\Psii(N_\nu))\ ,\quad i\in\M^\nu,\nu\in\mathcal N     
,\\ 
-\delta_\nu^i \li V_i =  \dsp  \suMjn \sigma^\nu_{ij} (U_j(N_\nu)-U_i(N_\nu))\ \ ,\quad\qquad i\in\M^\nu,\nu\in\mathcal N ,   
 \\    
\displaystyle\suM \iIi U_i(x)dx =\mu_s\ .
\end{array}
\right.
\ee

\medskip

We are going  to prove  existence of solutions to problem (\ref{pr-ell22}) using a fixed point technique; we need some preliminary results.

Given $f_i\in H^2(I_i)$, for $i\in\M$,   we introduce the functions
$$E^f_i(x) =\exp\left(\frac {f_i(x)}{\li}\right)\ ,\qquad J^f_i(x)= \dsp\frac {\beta_i}{\li}\int_0^x \exp\left(-\frac {f_i(s)}{\li}\right) ds\ .$$

\begin{lemma}\label{lemma0} Let $\mathcal G$ an acyclic graph and let (\ref{nd}) hold.
 Given  a function $f\in H^2(\A)$ and  real values $\mu_s$ and  $V_i$, $i\in\M$,  there exists a unique  
 $C^f=(C^f_1, C^f_2, ...,C^f_m) $ such that the functions 
$$\mathcal U^f_i(x)=\exp\left(\frac {f_i(x)} \li\right) \left( C^f_i -\dsp \frac{\beta_i}{\li} V_i\dsp\int_0^x\exp\left(-\frac {f_i(s)} \li\right) \ ds\right)$$
  satisfy
 \be\label{1q} -\delta_\nu^i \li V_i =    \suMjn \sigma^\nu_{ij} (\mathcal U^f_j(N_\nu)-\mathcal U^f_i(N_\nu))\ ,\qquad \nu\in\mathcal N  \ ,i\in\M^\nu \ , 
\ee 
\be\label{2q}\displaystyle\suM \iIi  \mathcal U^f_i(x)dx =\mu_s\ .\ee
   \end{lemma}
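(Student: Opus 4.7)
The plan is to reformulate conditions (\ref{1q})--(\ref{2q}) as an $m\times m$ linear system in $C=(C_1,\dots,C_m)\in\R^m$ and to establish its invertibility. Setting $h_i(x):=-\frac{\beta_i V_i}{\lambda_i}\,E^f_i(x)\int_0^x e^{-f_i(s)/\lambda_i}\,ds$, one has $\mathcal{U}^f_i(x)=C_i E^f_i(x)+h_i(x)$, so evaluating at each inner node turns (\ref{1q}) into affine scalar equations in $C$, while (\ref{2q}) adds one more. Summing (\ref{1q}) over $i\in\M^\nu$ and using $\sigma_{ij}^\nu=\sigma_{ji}^\nu$ annihilates the right-hand side, so the $|\M^\nu|$ equations at $N_\nu$ carry the single dependency $\sum_{i\in\M^\nu}\delta_i^\nu\lambda_i V_i=0$, which is the flux conservation automatically satisfied by the $V_i$ determined via (\ref{Vl}). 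Discarding one redundant equation per inner node, the number of independent equations becomes $\sum_\nu(|\M^\nu|-1)+1=2m-|\mathcal J|-|\mathcal N|+1=m$, using $|\mathcal J|+|\mathcal N|=n=m+1$ for the tree.

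Since the system is square, existence and uniqueness of $C^f$ reduce to injectivity. To this end I set $V_i=0$ for every $i\in\M$ and $\mu_s=0$; then $h_i\equiv 0$ and $\mathcal{U}^f_i=C_iE^f_i$. Applying (\ref{ccnd}) with $V\equiv 0$, at each internal node $N_\nu$ one obtains $\mathcal{U}^f_j(N_\nu)=\mathcal{U}^f_{k_\nu}(N_\nu)$ for every $j\in\M^\nu$; denoting this common nodal value by $u_\nu$, one has $C_i=u_\nu e^{-f_i(N_\nu)/\lambda_i}$ for each $i\in\M^\nu$.

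Now the acyclic structure propagates the $u_\nu$. If $\mathcal N=\emptyset$ the graph is a single arc and uniqueness follows directly from (\ref{2q}); otherwise fix any root node $N_1\in\mathcal N$. For each internal arc $I_i$ joining internal nodes $N_\nu$ and $N_\xi$, comparing the two expressions for $C_i$ forces $u_\xi=u_\nu\exp\bigl((f_i(N_\xi)-f_i(N_\nu))/\lambda_i\bigr)$, a strictly positive multiplicative relation. Since a tree has a unique path between any two nodes, iterating along the path from $N_1$ to $N_\nu$ gives $u_\nu=\rho_\nu u_1$ with $\rho_\nu>0$, whence $C_i=q_i u_1$ with $q_i>0$ for every $i\in\M$. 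Inserting this into the homogeneous form of (\ref{2q}) yields $u_1\sum_i q_i\int_{I_i}E^f_i(x)\,dx=0$; the sum is strictly positive, forcing $u_1=0$ and hence $C=0$.

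The main delicate step is the invocation of (\ref{ccnd}), whose derivation in \cite{noi} crucially uses the non-degeneracy hypothesis (\ref{nd}); it is this relation that reduces the vector-valued transmission conditions to scalar nodal continuity once $V\equiv 0$. After that, the positivity of the exponential propagation factors along the tree makes the mass equation a sign-definite constraint, and the rest of the argument is essentially bookkeeping on the dimension count.
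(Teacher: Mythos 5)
Your proof is correct, and it reaches the same mathematical heart as the paper's — the reduction of (\ref{1q}) via (\ref{ccnd}) under hypothesis (\ref{nd}), propagation along the tree, and positivity of the exponential weights so that the mass condition (\ref{2q}) pins down the last degree of freedom — but it is organized quite differently. The paper argues constructively: starting from $N_1$ it expresses every $C^f_i$ as an affine function $C^f_i=\tilde Q^f_i C^f_{k_1}+\tilde O^f_i$ of the single unknown $C^f_{k_1}$ by marching across consecutive nodes (acyclicity guaranteeing consistency), exhibits the solution set of (\ref{1q}) as a one-parameter family, and then solves (\ref{2q}) explicitly for $C^f_{k_1}$, the denominator being nonzero because $\tilde Q^f_i>0$. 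You instead run a Fredholm-type argument: squareness of the linear system in $C$ via the count $\sum_\nu(|\M^\nu|-1)+1=m$ (using $|\mathcal J|+|\mathcal N|=m+1$ for a connected acyclic graph), followed by triviality of the kernel, where (\ref{ccnd}) with $V\equiv 0$ yields nodal continuity, the tree structure propagates the common nodal values with strictly positive factors, and the homogeneous mass row forces everything to vanish. What your route buys is a cleaner uniqueness core and, notably, an explicit accounting of the one dependency per node: you correctly observe that existence requires $\sum_{i\in\M^\nu}\delta_i^\nu\lambda_i V_i=0$ at each inner node — a point the lemma's literal statement (arbitrary real $V_i$) glosses over, and which the paper's proof also assumes implicitly by treating (\ref{ccnd}) as equivalent to (\ref{1q}); in the paper's application this holds because the $V_i$ come from (\ref{Vl}). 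What your route loses is the explicit representation: the paper's quantities $\tilde Q^f_i$, $\tilde O^f_i$ and the resulting formula (\ref{iui}) for $\mathcal U^{\Psi^I}$ are not incidental — they are the objects on which the contraction estimates in Theorem \ref{pmu2} are built, so the constructive proof does extra work that the abstract invertibility argument would have to redo later.
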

 \begin{proof}
The conditions (\ref{1q}) can be rewritten as 
 (\ref{ccnd}).  Using such relations at the node $N_1$,   
 for $ i\in \M^1$
we can express the coeffcients  $C^f_i$
as linear combination of   the values  $V_i$ and    $\C_{k_1}$ , where $k_1$ is the index in (\ref{nd}),
$$C^f_i = (E^f_i(N_1))^{-1} \left( E^f_{k_1}(N_1)
\left(C^f_{k_1}-V_{k_1} J_{k_1}^f (N_1)\right)
+\dsp\sum_{j\in\mathcal M^1} \gamma^1_{ij} V_j\right) + 
V_i  J_i^f (N_1)  \ .$$
Setting 
$$Q^f_{i\nu} =\dsp  E^f_{k_\nu}(N_\nu) E^f_i(N_\nu)^{-1}\ ,$$
$$ O^f_{i\nu}=  E_i^f (N_\nu)^{-1} \left(-V_{k_\nu}E^f_{k_\nu}(N_\nu) J_{k_\nu}^f (N_\nu)
+\dsp\sum_{j\in\mathcal M_\nu} \gamma^1_{ij} V_j\right)+ V_i J_i^f (N_\nu)
$$ 
we have
$$C^f_i=  Q^f_{i1} C^f_{k_1} + O^f_{i1}\ ,\ \  \  i\in\M^1 \ ;$$
now, if $N_\nu$ and $N_1$ are two consecutive nodes,  linked by the arc $I_l$, arguing as before we infer   that the coefficients $C^f_{k_\nu} $ and $C^f_{k_1}$ have to satisfy the following relation
$$C_l^f=Q^f_{l \nu} C^f_{k_\nu} + O^f_{l\nu} =Q^f_{l1} C^f_{k_1} + O^f_{l1}\ ,$$
which expresses $C_{k_\nu}^f$ in terms of $\C_{k_1}$; 
so, for all $i\in\M^\nu$, we have the expression
$$ \dsp C^f_i=  Q^f_{i\nu} C^f_{k_\nu} + O^f_{i\nu}=\frac{ Q^f_{l1} Q^f_{i\nu}}  {Q^f_{l\nu}}  C^f_{k_1} +
 Q^f_{i\nu}\frac { O^f_{l1}-O^f_{l\nu}} {Q^f_{l\nu}} +O^f_{i\nu} \ \ .
$$
Since there are no cycles in the network, iterating this  procedure we can write  univokely all the coeffcients $C^f_i$, $i\in\M$  in terms of $C^f_{k_1}$,
\be\label{inf1} \dsp C^f_i=  \tilde Q^f_iC^f_{k_1} + \tilde O^f_{i}\ , \ i\in \M\ ,\ee
where $\tilde Q^f_i$ and $\tilde O^f_i$ are suitable quantities depending on the function $f$ and on the values $V_i$.
In other words, system (\ref{1q}) has 
  $\infty^1$ solutions given by (\ref{inf1}), for $C_{k_1}\in\R$.
In order to determine $C^f_{k_1}$ we use condition (\ref{2q}):
$$\dsp C^f_{k_1}=\left( \suM \tilde Q^f_i \iIi E_i^f(x) dx \right)^{-1}
\left( \mu_s - \suM \iIi\left( \tilde O^f_i- V_i J^f_i(x)\right) E^f_i(x)
 dx\right) .
$$
\end{proof}

Now, given $f\in H^2(\A)$ we consider the problem

\be\left\{\begin{array} {ll} \label{ep}-D_i\partial_{xx}\Psii(x)+b_i\Psii(x)= a_i \mathcal U^f_i(x)\ ,  \qquad \t{ for all } i\in\M \ ,\\ \\
\eta_j\partial_x\Psi_{i(j)}(e_j)+d_j \Psi_{i(j)}(e_j) =P_j\ , \qquad  j\in\mathcal J,  \\ \\
\delta_\nu^i\dsp D_i\partial_x\Psii(N_\nu)=\suMjn \alpha^\nu_{ij} (\Psi_j(N_\nu)-\Psii(N_\nu))\ ,\quad \nu\in\mathcal N\   ,    
\end{array}\right.  \ee
which has a unique solution (see the proof of Proposition \ref{mdiss}).
 
 We set
$ \dsp \Theta:= \sum_{j\in\mathcal J} |P_j| + \mu_s \max\{a_i\}_{i\in\M}
 ;$
 then the following   estimates hold  .

\bl\label{1L} Let $\mathcal G$ be an acyclic graph.
Let  $\mathcal U^f_i(x)\geq 0$ and let $\Psi\in H^2(\mathcal A)$ be the solution to problem (\ref{ep}). Then there exist two positive constants $K_1,K_2$, depending on the parameters 
$b_i,D_i,$ $L_i, d_j$ ( $i\in\M$, $j\in\mathcal J$), such that 
\be\label{phinf0} \Vert \Psi\Vert_{\infty}\leq K_1 \Theta\ ,\ \ \ 
\Vert \Psi_x\Vert_{\infty}   \leq K_2 \Theta\ .\ee

\el

\begin{proof}

We multiply the first equation in (\ref{ep}) by $\Psii$, we integrate on the interval $\Ii$ and we sum for $i\in\M$ ; using (\ref{coer}) to treat
the terms evaluated at the internal nodes,  we obtain
$$ \suM\iIi \left(D_i\Psix^2 +b_i \Psi^2 \right) dx
\leq 
 \sum_{j\in\mathcal J} \left(P_j \Psi_{i(j)}(e_j)-d_j \Psi^2 _{i(j)}(e_j)\right)
+ \suM a_i\iIi \mathcal U^f_i |\Psii| dx
$$
$$\leq\left(\sum_{j\in\mathcal J}\vert P_j \vert + \mu_s \max\{a_i\}_{i\in\M}\right)
 \suM c_i^S\Vert \Psii\Vert_{H^1} 
  \ ,$$
where $c_i^S$ are Sobolev constants.
This yields the first inequality in the claim.

In order to obtain the second inequality, first we notice that, if $e_j$ is an external node and $I_{i(j)}$ is the corresponding  external arc, then  the following inequality holds
$$|D_{i(j)} \partial_x\Psi_{i(j)}(x)|\leq \int_{I_{i(j)}}  D_{i(j)} | \partial_{yy}\Psi_{i(j)} (y) |\ dy + |P_j-d_j \Psi(e_j)|
\ .$$
Then we consider an  internal arc $I_\iota $ and   its starting node $N_\eta$,  the sets $\mathcal Q$, $\mathcal J'$  as in (\ref{sets}) and 
$$\mathcal S=\{ i\in \mathcal M: I_\iota  \textrm{ is incident with } N_\nu  \textrm{ for some }\nu \in\mathcal Q\}\ ;
$$
at each node the conservation of the flux (\ref{contpsi}) holds, then
$$\displaystyle \sum_{\nu\in\mathcal Q\cup \{\eta\}} \left( \sum_{i\in I^\nu} D_i\Psi_x(N_\nu) -\sum_{i\in O^\nu} D_i\Psi_x(N_\nu)\right) =0\ .$$
Then, for all  $x\in I_\iota$, using the above equality and the boundary conditions (\ref{phibc}), we have
$$
\dsp D_\iota {\Psi_\iota}_x (x)
=  \sum_{i\in \mathcal S} \iIi D_i{ \Psi_i}_{yy}(y) \ dy + \int_{I_\iota} D_\iota{\Psi_\iota}_{yy}(y)\ dy -
 \sum_{j\in \mathcal J'}   \left( P_j-d_j \Psi(e_j)\right)  \ .$$
Then , for all $l\in \M$,
$$\dsp D_l |{\Psi_l}_x(x)| \leq  \sum_{j\in\mathcal \mathcal J} \vert P_j -d_j\Psi(e_j)\vert+ \suM
\iIi | b_i \Psi_i(y) - a_i \mathcal U^f_i(y) | dy $$
and using the first inequality in 
(\ref{phinf0}) we obtain
$$\max_{i\in\M} \Vert \Psix\Vert_\infty \leq \frac {\Theta}{\min\{D_i\}_{i\in\M}} \left (1+
K_1\left( \suM b_iL_i+ \dsp \sum_{j\in\mathcal J}d_j \right)
\right)\ .
$$
\end{proof}

The previous results give the tools to prove the following theorem of existence for stationary solutions under smallness conditions for some data; in particular we remark that the condition on $\dsp\suM |V_i|$ is a condition on $W_j$, $j\in\mathcal J$, thanks to (\ref{Vl}).

\bt \label{pmu2} 
Let $\mathcal G$ be an acyclic graph and let ( \ref{nd}) hold.
Let $\dsp \sum_{j\in\mathcal J} W_j  =0 $;
there exists $\epsilon>0$ and $\delta=\delta(\Theta)>0$, increasing with $\Theta$, such that, if  
$  \delta\dsp \suM| V_i|\leq \mu_s\ \t{ and } \  0\leq \mu_s+\dsp \suM| V_i| < \epsilon\ $,   then problem (\ref{sysi}), (\ref{bi}), (\ref{KC}), (\ref{TC}) has  a  stationary solution 
 $(U(x),V(x),\Psi(x))\in (C^\infty(\mathcal A))^2\times C^\infty(\A)$ satisfying (\ref{mu0}), which is the unique 
 one
 with  non-negative $U(x)$.
\et
\begin{proof}
If a stationary solution $(U(x),V(x),\Psi(x))$ exists  then $V_i$ are univokely determined by the boundary data  $W_j$, $j\in\mathcal J$; moreover  $U,\Psi$ satisfy (\ref{pr-ell22}) and $U_i(x)$ are univokely determined by $\Psi_i(x)$ and the values $V_i$, $\sigma_{ij}$ and $\mu_s$
(Lemma \ref{lemma0}). We remark that, if the solution
verifies  the properties in the claim, then  the estimates in Lemma \ref{1L} hold for $\Psi$.

Let  $G$ be the operator defined in $D(A_2)$ (see (\ref{da2}))  such that, if $\Psi^I\in D(A_2)$ then $\Psi= G(\Psi^I)$ is the solution of problem
(\ref{ep}) where  $f=\Psi^I$ and $ \mathcal U^{\Psi^I}$ is the function in Lemma \ref{lemma0}.
We consider $G$ on the set
 $$B_\Theta:= \{ \Psi\in D(A_2) : \Vert \Psi\Vert_{\infty}   \leq K_1 \Theta\ ,
\Vert \Psi_{x}\Vert_{\infty}   \leq K_2 \Theta\ \}\ ,$$
where $K_1,K_2$ are the constants in Lemma \ref{1L}, equipped with the distance $d$ generated by norm of $H^2(\A)$; $(B_{\Theta},d)$ is a complete metric space.

Using the expression of  $C_i^f$ given in the proof of Lemma \ref{lemma0}  and setting
$$\dsp\Lambda^{\Psi^I}_1:= \dsp\suMj \int_{I_j} \left( \tilde O^{\Psi^I}_j -V_jJ^{\Psi^I}_j(x) \right) E^{\Psi^I}_j(x)dx 
\ ,\ \ \ \dsp \Lambda^{\Psi^I}_2:=\suMj \tilde Q^{\Psi^I}_j \int_{I_j} E^{\Psi^I}_j(x) dx\ ,$$
we can write
\be \label{iui}
\mathcal U^{\Psi^I}_i(x) = E^{\Psi^I}_i(x) \,
\frac 
{ \tilde Q^{\Psi^I}_i( \mu_s -\Lambda^{\Psi^I}_1)
+
 ( \tilde O_i -V_i J_i(x))\Lambda^{\Psi^I}_2}
{\Lambda^{\Psi^I}_2 } 
.\ee

It is readily seen that there exist some positive quantities 
$q_i^{\Theta}$, increasing in  $\Theta$, and some positive quantities $q_i^{-\Theta}$, decreasing  in $\Theta$, depending also    on the parameters of the problem, 
such that, for all $f\in B_\Theta$, 
\be\label{j1}0< q_1^{-\Theta} \leq  E^f_i (x) \leq q_1^\Theta\ ,\  \ 0\leq   J^f_i (x) \leq q_2^\Theta\ ,\ \ \forall x\in\Ii\ , \ \ \forall i\in\M\ ,\ee
\be\label{j2}0< q_3^{-\Theta} \leq \tilde Q^f_i \leq q_3^\Theta\ , \ \ \dsp \vert \tilde O_i^f\vert \leq q^\Theta_4 \sum_{j\in\M} |V_j|\ ,
\ \  \ \forall i\in\M\ ,\ee
\be\label {j3}0<q_5^{-\Theta}\leq  \Lambda_2^f \leq q_5^\Theta\ ,\ \  \vert \Lambda_1^f\vert \leq q_6^\Theta \suMj |V_j|\ .\ee
Hence, fixed 
 $\mu_s\geq 0$ and $P_j$, $j\in\mathcal J$, it is possible to find a quantity $\delta=\delta(\Theta)$ such that,  if $\delta \dsp \suM |V_i|\leq \mu_s$
then $\mathcal U^{\Psi^I}_i(x) \geq0$ for all $i\in\M$.
This fact allows us to use Lemma \ref{1L} and infer that 
 $\Psi\in B_\Theta$. 

Now we are going to prove that, if 
$\mu_s+\dsp \suM|V_i|$ is small then $G$  is a contraction mapping in $B_\Theta$.
We consider $\Psi^I,\ov\Psi^I \in B_\Theta$ and the corresponding $\Psi=G(\Psi^I)$ and $\ov\Psi=G(\ov\Psi^I)$;
using the equation satisfied by $\Psi$ and $\ov \Psi$ 
and  (\ref{coer}), 
we infer that 
\begin{equation}\label{mai}\suM \Vert {\Psi_i} -\ov {\Psi_i}\Vert^2_{H^2} 
\leq 
K_3 
 \suM \Vert \U^{\Psi^I}_i - \U_i^{\ov \Psi^I}\Vert^2_{2} \ ,\end{equation}
 where $K_3$ is a suitable constant depending on $a_i,b_i,D_i$;
moreover, we have
\begin{equation}\label{ora}\begin{array}{ll} 
 \displaystyle
 \left \vert \U_i^{\Psi^I}(x)- \U_i^{\ov \Psi^I}(x)\right \vert
  \leq
 \dsp
 \left  \vert E^{\Psi^I}(x)- E^{\ov\Psi^I} (x)\right\vert 
  \ \frac{q_3^\Theta \mu_s+ (q_3^\Theta q_6^\Theta +q_4^\Theta +q_2^\Theta q_5^\Theta )\dsp \suMj|V_j|}
 {q_5^{-\Theta} }\\ \\ +
 \dsp\frac {q_1^\Theta}
  {({q_5^{-\Theta}})^2}
   \left(\mu_s+q_6^\Theta \suMj |V_i|\right)
    \left(q_5^\Theta   \left \vert \tilde Q^{\Psi^I} -\tilde Q^{\ov \Psi^I}\right\vert    +q_3^\Theta
  \left \vert \Lambda_2^{\Psi^I}-\Lambda_2^{\ov\Psi^I}\right\vert       \right)\\ \\
 +  \dsp q_1^\Theta
  \left( \frac{q_3^\theta q_5^\Theta} {({q_5^{-\Theta}})^2}.  \left  \vert \Lambda_1^{\Psi^I}-\Lambda_1^{\ov\Psi^I}\right\vert 
  + 
  \left \vert \tilde O^{\Psi^I}-\tilde O^{\ov \Psi^I}\right \vert    +  \left\vert V_i  \right \vert  \left\vert J_i^{\Psi^I}(x) - J_i ^{\ov\Psi^I} (x)	\right \vert \right)
 \ .\end{array}\end{equation}

It is easily seen that, for suitable positive quantities 
 $q_i^{\Theta}$,  depending on  $\Theta$  and on the parameters of the problem,
  increasing with $\Theta$, for any  $\Psi^I, \ov \Psi^I\in B_\Theta$ the following inequalities hold
$$\left\vert  E_i^{\Psi^I}(x) -{E_i^{\ov\Psi^I}}(x)
\right\vert \leq q^\Theta_{7} \vert \Psi_i^I(x) -\ov \Psi_i^I(x) \vert\ ,
\ \ \left \vert J_i^{\Psi^I}(x)- J_i^{\ov \Psi^I}(x) \right \vert \leq q^\Theta_{8} \Vert \Psi_i^I -\ov \Psi_i^I \Vert_\infty\,  \  ,$$

$$\left\vert  \tilde Q_i^{\Psi^I}- \tilde Q_i^{\ov \Psi^I}  \right  \vert \leq q^\Theta_{9} \suMj\Vert \Psi_j^I-\ov \Psi_j^I \Vert_\infty\,    ,
\ \  \ \left\vert  \tilde O_i^{\Psi^I}- \tilde O_i^{\ov \Psi^I}  \right  \vert \leq q^\Theta_{10}  \suMj|V_j| \suMj\Vert \Psi_j^I-\ov \Psi_j^I \Vert_\infty  ,
$$
$$\left\vert \Lambda_1^{\Psi^I} - \Lambda_1^{\ov\Psi^I} \right\vert \leq q^\Theta_{11} \left( \suMj |V_j|\right) \suM\Vert \Psi_i^I-\ov \Psi_i^I \Vert_\infty
\ ,
$$$$
 \left\vert \Lambda_2^{\Psi^I} - \Lambda_2^{\ov\Psi^I} \right\vert \leq 
q^\Theta_{12}  \suM\Vert \Psi_i^I-\ov \Psi_i^I \Vert_\infty
  ;
$$
they can be used in (\ref{ora})  so that (\ref{mai}) implies
$$\suM \Vert {\Psi_i} -\ov {\Psi}\Vert_{H^2} 
\leq  q^\Theta \left(\mu_s + \suMj|V_j|\right)
 \suM \Vert \Psi_i^I -\ov \Psi_i^I\Vert_{H^1} \ ,$$
  where $q^\Theta$ is a quantity increasing with $\Theta$, depending also on the parameters of the problem;
hence, for $\mu_s+\dsp \suMj|V_j|$ small enough, $G$ is a contraction mapping on $B_\Theta$.
Let $\Psi$ be the unique fixed point of $G$ in $B_\Theta$ and let $U=\mathcal U^{\Psi}$; then $(\Psi, U)$ is a solution to Problem $(\ref{pr-ell22})$  and it is the unique one verifying $U\geq 0 $. Regularity properties follow by the equations in (\ref{pr-ell22}).
 \end{proof}

\begin{rema} If 
  $W_j=0$ for $j\in\mathcal J$ then $V_i=0$  for $i\in \mathcal M$ and  $U_i(N_\nu)=U_j(N_\nu)$ for $i,j\in\M^\nu$, for all $\nu\in\mathcal N$;  in particular, if  $\mu_s\geq 0$ then $C_i\geq  0$ for all $i\in\M$, i.e. $U(x)\geq  0$. In this case the stationary solution of the previous  theorem is the unique stationary solution with mass $\mu_s$.
\end{rema} 
 
When the quantity 
$\dsp\suM |V_i|$ is not small enough respect to $\mu_s$ we do not have informations about the sign of $U_i(x)$; however, if the boundary data, $\mu_s$  and the parameters of the problem satisfy some relations, a stationary solution with mass $\mu_s$ exists.

First, if we set $\ov a:=\dsp \max\{a_i\}_{i\in\M}$,
as in
 Lemma \ref{1L} we prove that if
$\Psi\in H^2(\mathcal A)$ is  the solution to problem (\ref{ep}), then there exist two positive constants $\ov K_1,\ov K_2$, depending on the parameters $ b_i,D_i,$ $L_i, d_j$  ($i\in\M$, $j\in\mathcal J$), such that 
\be\label{pinf1}\ \Vert \Psi\Vert_{\infty}\leq \ov K_1( \ov a\Vert \U^f\Vert_1 +\dsp \sum_{j\in\mathcal J} |P_i|)
\ ,\ \ \ 
\Vert \Psi_x\Vert_{\infty}   \leq \ov K_2( \ov a\Vert \U^f\Vert_1 +\dsp \sum_{j\in\mathcal J} |P_i|)
\ .\ee
Moreover, let   $\ov \beta$, $\underline \lambda$ be as in Section 3 and $\gamma$ as in Lemma \ref{uinf}, and let 
$$\Omega:=|\mu_s|+2|\A|\left(\frac{2 \ov\beta}{\underline \lambda}|\A|+3\gamma\right)\suM|V_i|\ ;$$
 if 
$(U(x),V,\Psi(x))$ 
is a stationary solution, using 
Proposition \ref{uinf} and (\ref{pinf1}), we obtain 
$$\Vert U\Vert_1
\leq \dsp \Omega+\frac {4|\A| \dsp \sup_{i\in\M} \Vert \Psi_{i_x}\Vert_\infty }{\underline \lambda}\Vert U\Vert_1  \leq \Omega+
\frac{4|\A| \ov K_2}{\underline \lambda}\left( \ov a \Vert U\Vert_1+ \sum_{j\in\mathcal J}|P_j|\right) \Vert U\Vert_1\ .$$
Then, if 
\be\label{2sol}\dsp  1-\frac{4 |\A| \ov K_2}{\underline \lambda} \sum_{j\in\mathcal J}| P_j| > 0\ ,\ \
 \dsp \Omega < \frac {\underline \lambda}{16|\A| \ov K_2\ov a}\left (1-\dsp\frac{4 |\A| \ov K_2}{\underline \lambda}\dsp\sum_{j\in\mathcal J}|P_j|\right)^2\ ,\ee
setting
$$\dsp\mu^{\pm}:= \frac{\underline \lambda}{8|\A| \ov K_2 \ov a}
\left(1-\frac{4|\A| \ov K_2}{\underline \lambda}
\dsp\sum_{j\in \mathcal J}|P_j| \pm \sqrt{ \left(1-\dsp\frac{4|\A| \ov K_2}{\underline \lambda}
\dsp\sum_{j\in \mathcal J}  |P_j|\right)^2 - \frac{16 |\A| \Omega \ov K_2\ov a}{\underline \lambda}}\  \right) ,$$
we can conclude that $\mu^{\pm}>0$ and , if a stationary solution $(U,V,\Psi)$ exists, then 
$\dsp \Vert U\Vert_1 \leq \mu^-\ \ \t{ or }\ \ 
 \Vert U\Vert_1 \geq  \mu^+ $.
 
 So, under suitable smallness conditons for the data and $|\mu_s|$, we are able to prove the existence of  a stationary solution verifying
 $ \Vert U\Vert_1 \leq \mu^-$.

\bt \label{pmu3} 

Let $\mathcal G$ be an acyclic network and let (\ref{nd}) hold. Let 
$\dsp \sum_{j\in\mathcal J}  W_j  =0\  $ and  let (\ref{2sol}) hold;
there exists $\epsilon>0$ such that, if  $ |\mu_s|+\dsp\suM| V_i| < \epsilon$,   then problem (\ref{sysi}),(\ref{bi}),(\ref{KC}),(\ref{TC}) has  a  stationary solution $(U(x),V(x),\Psi(x))\in (C^\infty(\mathcal A))^2\times C^\infty(\A)$ 
satisfying (\ref{mu0}), which is the unique 
 one verifying 
 $\Vert U\Vert_1\leq \mu^-$.
\end{theo}

\begin{proof}
We proceed as in the proof of Theorem \ref{pmu2}:
  we set $\Theta_1:= \ov a \mu^- +\dsp \sum_{j\in\mathcal J} |P_i|$ and we consider the map $G$ defined there, on the set 
  $$B_{\Theta_1}:= \{ \Psi\in D(A_2) : \Vert \Psi\Vert_{\infty}   \leq \ov K_1 \Theta_1\ ,
\Vert \Psi_{x}\Vert_{\infty}   \leq \ov K_2 \Theta_1\ \}$$
equipped with the distance $d$ generated by norm of $H^2(\A)$; $(B_{\Theta_1},d)$ is a complete metric space.

Fixed 
$\Psi^I\in B_{\Theta_1}$, $\U^{\Psi^I}$ is still given by (\ref{iui}) and the relations(\ref{j1})-(\ref{j3}) hold, where the quantities  $q_i$ here depend on  $\Theta_1$.

Thanks to  (\ref{pinf1}), we can prove that  $\Psi=G(\Psi^I)\in B_{\Theta_1}$ if we show that  
$\Vert \U^{\Psi^I}\Vert_1\leq \mu^-$; this inequality can be achieved arguing as in the computations performed before the claim of this theorem, taking into account that
$$\li\U_{i_x}^{\Psi^I}=\U_i^{\Psi^I}{\Psi^I_i}_x- \beta_i V_i\ \ \t{ for all }i\in \M.$$

The last part  of the proof is equal to the one of 
Theorem \ref{pmu2},  using the quantity  $\Theta_1$  in place of $\Theta$, since,  
for $\Psi^I, \ov\Psi^I\in B_{\Theta_1}$ the following inequality holds
$$\suM \Vert {\Psi_i} -\ov {\Psi}\Vert_{H^2} 
\leq  q^{\Theta_1} \left(|\mu_s| + \suMj|V_j|\right)
 \suM \Vert \Psi_i^I -\ov \Psi_i^I\Vert_{H^1} \ ,
 $$
 where $q^{\Theta_1}$ depends on $\Theta_1, \beta_i,\lambda_i, L_i,\gamma_{ij}^\nu$, increases with $\Theta_1$
.
Let $\Psi$ be the unique fixed point of $G$ in $B_{\Theta_1}$ and let $U=\mathcal U^{\Psi}$; then $(\Psi, U)$ is the unique solution to Problem $(\ref{pr-ell22})$ such that $\Vert U\Vert_1\leq \mu^-$ and the claim is proved.
\end{proof}
\end{section}

\begin{section}
{\bf Global solutions}

Here we use the results of Sections 4 and 5 to prove the existence of global solutions to problem (\ref{sysi})-(\ref{compat}).
First we assume that  $\mathcal G$ is an acyclic graph, so that the  existence of some stationary solutions holds.

Let $\mu_s\geq 0$, let the assumptions of Theorem \ref{pmu2} hold and let $(U(x),V,\Psi(x))$ be the stationary solution; due to (\ref{phinf0}) we can control the size of the quantity $\Vert U\Vert_\infty +\Vert \Psi_x\Vert_\infty$ by means of  the size of $\ov a\mu_s+\dsp\sum_{j\in\mathcal J} |P_j|$, 
in order to satisfy the hypothesis in Theorem \ref{4.1}. So, such  theorem yields the following one.

We set $\dsp\mu(t):=\suM\iIi u_{0i}(x) dx - \sum_{j\in\mathcal J}  \int_0^t \mathcal { W}_j(s)ds$.

\bt\label{6.1}
Let $\mathcal G$ be an acyclic graph and let (\ref{nd}) hold. 
Let  the assumptions of Theorem \ref{pmu2} hold and 
let $(U(x),V,\Psi(x))$ be the stationary solution to problem 
(\ref{sysi}), (\ref{KC}),(\ref{TC}), (\ref{mu0}), (\ref{bi}) verifying $U(x)\geq 0$.
Then if the quantities
 $$\ov a\mu_s+\dsp\sum_{j\in\mathcal J} |P_j|\ , \ \ \Vert u_0-U\Vert_{H^1}, \Vert v_0-V\Vert_{H^1}, \Vert \psi_0-\Psi\Vert_{H^2}\ ,$$
$$\Vert \mathcal P(\cdot)-P\Vert_{H^{1}(0,+\infty)}, \Vert \mathcal W(\cdot)-W\Vert_{W^{1,2}(0,+\infty)}, 
\Vert  \mu(\cdot)-   \mu_s
\Vert_{L^{2}(0,+\infty)}
$$
are suitably small, then the problem (\ref{sysi})-(\ref{compat})
has a global solution $(u,v,\psi)$ such that
$$u,v\in C([0,+\infty);H^1(\A))\cap C^1([0,+\infty); L^2(\A))$$
$$\psi\in C([0,+\infty);H^2(\A))\cap C^1([0,+\infty); L^2(\A))\cap H^1((0,+\infty); H^1(\A)) \ .$$
Moreover, for $i\in\M$,
$$
 \lim_{t\to +\infty} \suM\Vert u_i(\cdot, t)-U(\cdot)\Vert_{C(\ov I_i)} =0\ ,
\displaystyle \lim_{t\to +\infty} \suM\Vert v_i(\cdot, t)-V(\cdot)\Vert_{C(\ov I_i)}  =0\ ,$$
$$\displaystyle \lim_{t\to +\infty}\suM \Vert \psi_i(\cdot, t)-\Psi(\cdot)\Vert_{C^1(\ov I_i)}= 0\ .
$$
\et

\smallskip

On the other hand, if the assumptions of Theorem \ref{pmu3} hold, then  the hypothesis in Theorem \ref{4.1} can be satisfied by controlling the size of $\ov a \mu^-+\dsp \sum_{j\in\mathcal J} |P_j|$; then, similarly to the above result,   we obtain the existence of  global solutions corresponding to data which are small perturbations of the stationary solution of Theorem \ref{pmu3}, assuming that $\ov a \mu^-+\dsp \sum_{j\in\mathcal J} |P_j|$ is suitably small.

\medskip

In the cases of general networks, we notice that  for any $\{P_j\}_{j\in \mathcal J}$, 
 it easy to prove the existence of the stationary solution $(U(x),V(x),\Psi(x))=(0,0,\Psi^0(x))$, where 
$\Psi^0 $ is the unique solution to problem (\ref{ep}) with $\U^f=0$.
If 
$\dsp 
\sum_{j\in\mathcal J} \vert P_j\vert $ is small enough ,  then $(0,0, \Psi^0(x))$ is the unique  stationary solution 
to problem (\ref{sysi}),(\ref{KC}) -(\ref{nd}),(\ref{mu0}) with $\mu_s=0$,  satisfying the boundary conditions
\be\label{bo}
V(e_j)=0\ ,\qquad\eta_j\partial_x \Psi(e_j)+d_j \psi(e_j)=P_j\ ,\quad j\in\mathcal J \ .
\ee

Moreover, if some particular relations  among the parameters of the problem  hold, then there exist stationary solutions constant on the whole network:
if we assume that  
\be\label{cs}\left\{\begin{array}{ll}\dsp \frac {a_i}{b_i}=r\ \ \t{ for all } i\in\M, \\
\t{ if }  P_j= 0  \t{ then } d_j= 0\ ,  \ \ \ \qquad  \qquad \ \ \qquad j\in\mathcal J\ ,    \\ 
\t{ if } P_j\neq 0 \t{ then } d_j\neq 0  \ \t{ and }
\dsp  \frac {P_j}{d_j}= r \frac{\mu_s}{|\A|}, \ \ j\in\mathcal J\ ,\end{array} \right.\ee
then, for any $\mu_s\in\R$  the triple $\dsp\left(\frac{\mu_s}{|\A|}, 0, r \frac {\mu_s}{|\A|}\right)$ is a stationary solution to 
(\ref{sysi}),(\ref{KC}) -(\ref{nd}),(\ref{mu0})  satisfying the boundary conditions (\ref{bo}).

Finally,  Theorem \ref{4.1} yields that the results of Theorem \ref{6.1} hold for general networks with  
$(U(x),V, \Psi(x))=(0,0,\Psi^0(x))$ and $\mu_s, W_j=0$, and with $(U(x),V, \Psi(x))=\dsp\left(\frac{\mu_s}{|\A|}, 0, r \frac {\mu_s}{|\A|}\right)$,  $W_j=0$ and conditions (\ref{cs}) .

\end{section}


\begin{thebibliography}{99}  



\bibitem {Bor} R.Borshe, S.Gottlich,A.Klar, P.Schillen, 
\emph{The scalar Keller-Segel model on networks}, 
Math. Models Methods Appl. Sci. 24, 221 (2014). 

\bibitem {BN}
G.Bretti, R.Natalini, 
\emph{On modeling Maze solving ability of slime mold via a hyperbolic model of chemotaxis}, 
J. Comput. Methods Sci. Eng. 18, no. 1, (2018), 85-115.



\bibitem {BNR}
G.Bretti, R.Natalini, M.Ribot,
 \emph{A hyperbolic model of chemotaxis on a network: a numerical study}, 
ESAIM: Mathematical Modelling and Numerical Analysis, Volume: 48, Issue: 1, Pages: 231-25.



 \bibitem{CH}  T. Cazenave and  A.Haraux, 
 \emph{An Introduction to Semilinear Evolution Equations}, 
 Clarendon Press-Oxford, 1998.


\bibitem {CC}
L.Corrias. F.Camilli, 
\emph{Parabolic models for chemotaxis on weighted nerworks}, 
J. Math. Pures  Appl 108 (2017), 459-480.


\bibitem {zua1}R.Dager,  E.Zuazua, 
\emph{ Wave propagation, observation and control in 1-d flexible multi-structures}, 
volume50 of Mathematiques \& Applications (Berlin) [Mathematics \& Applications] Springer-Verlag, Berlin (2006).



\bibitem {pic}M.Garavello,B.Piccoli, 
\emph{ Traffic flow on networks},
 volume 1 of AIMS Series on Applied Mathematics, American Institute of Mathematical Sciences (AIMS), Springfield, MO, (2006), Conservation laws model

\bibitem {AG} 
J.M.Greemberg, W.Alt,  
\emph{Stability results for a diffusion equation with functional drift approximating a chemotaxis model}, 
Trans.Amer.Math.Soc., 300 (1987), 235-258.

\bibitem{G} F.R. Guarguaglini, 
\emph{Stationary solutions  and asymptotic behaviour for a chemotaxis hyperbolic model on a network},
 NHM , Vol. 13, no. 1 March (2018) . 
 %(arXiv:1707.02955)


\bibitem {gumanari} 
F.R.Guarguaglini, C.Mascia, R.Natalini, M.Ribot, 
\emph{ Stability of constant states and qualitative behavior of solutions to a one dimensional hyperbolic model of chemotaxis}, Discrete Contin.Dyn.Syst.Ser.B, 12 (2009), 39-76.

\bibitem {noi} F.R.Guarguaglini, R.Natalini, 
\emph {Global smooth solutions for a hyperbolic chemotaxis model on a network}, 
SIAM J.Math.Anal. vol. 47, No. 6, 
(2015), 4652-4671. 

\bibitem {KK} 
Kedem, 0., and Katchalsky, A.
 \emph{ Thermo dynamic analysis of the
permeability of biological membranes to non-electrolytes}, 
Biochim. Biophys. Acta
27, (1958).

\bibitem{harley}
B.A.C.Harley, H.Kim,M.H. Zaman, I.V.Yannas,D.A.Lauffenburger, L.J.Gibson, 
\emph{ Microarchitecture of Three-Dimensional Scaffold Influences Cell Migration Behavior via Junction Interaction}, 
Biophysical Journal, 29 (2008), 4013-4024.


\bibitem{hillerho} 
T.Hillen, C.Rhode, F.Lutscher, 
\emph{Existence of weak solutions for a hyperbolic model of chemosensitive movement}, 
J.Math.Anal.Appl.,26(2001),173-199.

\bibitem{hillenst} T.Hillen,A.Stevens, 
\emph{Hyperbolic model for chemotaxis in 1-D}, 
Nonlinear Anal.Real World Appl.,1(2000), 409-433.


\bibitem{mandal}
B.B.Mandal, S.C.Kundu, 
\emph{Cell proliferation and migration in silk broin 3D scaffolds}, 
Biomaterials, 30(2009) 2956-2965.

\bibitem{mugn}
D.Mugnolo, 
\emph{ Simigroup Methods for Evolutions Equations on Networks}, 
Springer (2014).


\bibitem{naka} T. Nakagaki, H. Yamada and A. T\'oth. \emph{Maze-solving by an amoeboid organism} Nature, 407, (2000), 470.

\bibitem{nik} S. Nicaise,  
\emph{Control and stabilization of $2\times 2$ hyperbolic systems on graphs},
Math.Control Relat. Fields, 7 (2017), 53-72.




\bibitem {Segel} 
L.A.Segel, 
\emph{A theoretical study of receptor mechanisms in bacterial chemotaxis},  
SIAM J.Appl.Math.32(1977) 653-665.

\bibitem {spadac}
C.Spadaccio,A.Rainer,S.De Porcellinis, M.Centola, F.De Marco, M. Chello, M.Trombetta, J.A.Genovese. 
\emph{A G-CSF functionalized PLLA scaffold for wound repair: an in vitro preliminary study}, 
Conf. Proc. IEEE Eng.Med.Biol.Soc. (2010)

\bibitem{zong}
C. Zong and G. Q. Xu,  
\emph{Observability and controllability analysis of blood flow network}, 
Math.
Control Relat. Fields, 4 (2014), 521?554.



\bibitem {zua2} Valein, E.Zuazua, 
\emph{ Stabilization of the wave equation on 1-D networks}, 
SIAM J.Control Optim., 48 (4) (2009),2771-2797.

\end{thebibliography}
\end{document}